\newtheorem{lemma}{Lemma}[section]
\newtheorem{theorem}[lemma]{Theorem}
\newtheorem{prop}[lemma]{Proposition}
\newtheorem{cor}[lemma]{Corollary}
\newtheorem{claim*}{Claim}
\newtheorem{remark}[lemma]{Remark}
\newtheorem{remarks}[lemma]{Remarks}
\newtheorem{thm}[lemma]{Theorem}
\newtheorem{defn}[lemma]{Definition}
\newcommand{\Q}{{\mathbb Q}}
\newcommand{\Z}{{\mathbb Z}}
\newcommand{\BBl}{{\mathbb B}_{\ell, \infty}}
\newcommand{\Dtilde}{{\widetilde{D}}}
\newcommand{\Ktilde}{{\widetilde{K}}}
\newcommand{\Ftilde}{{\widetilde{F}}}
\newcommand{\calO}{{\mathcal O}}
\newcommand{\OO}{{\mathcal O}}
\newcommand{\fraka}{{\mathfrak a}}
\newcommand{\frakb}{{\mathfrak b}}
\newcommand{\frakl}{{\mathfrak l}}
\newcommand{\frakn}{{\mathfrak n}}
\newcommand{\frakp}{{\mathfrak p}}
\newcommand{\frakD}{{\mathfrak D}}
\newcommand{\frakN}{{\mathfrak N}}
\newcommand{\frakP}{{\mathfrak P}}
\newcommand{\pp}{{\mathfrak p}}
\DeclareMathOperator{\Tr}{Tr}
\DeclareMathOperator{\End}{End}
\DeclareMathOperator{\Norm}{Norm}
\DeclareMathOperator{\Disc}{Disc}
\DeclareMathOperator{\CM}{CM}
\numberwithin{equation}{section}
\numberwithin{table}{section}
\title{Comparing arithmetic intersection formulas for denominators of Igusa class polynomials}
\author{Jacqueline Anderson}
\address{Department of Mathematics, Box 1917, Brown University, Providence, RI
			02912, USA}
\email{jackie@math.brown.edu}
\urladdr{http://math.brown.edu/\~{}jackie}
\author{Jennifer S. Balakrishnan}
\address{Department of Mathematics, Harvard University, 1 Oxford Street, Cambridge, MA 02138, USA}
\email{jen@math.harvard.edu}
\urladdr{http://www.math.harvard.edu/\~{}jen/}
\author{Kristin Lauter}
\address{Microsoft Research, 1 Microsoft Way, Redmond, WA 98062, USA}
\email{klauter@microsoft.com}
\urladdr{http://research.microsoft.com/en-us/people/klauter/default.aspx}
\author{Jennifer Park}
\address{Department of Mathematics, MIT, 77 Massachusetts Avenue, Cambridge, MA 02139, USA}
\email{jmypark@math.mit.edu}
\urladdr{http://math.mit.edu/\~{}jmypark/}
\author{Bianca Viray}
\address{Department of Mathematics, Box 1917, Brown University, Providence, RI
			02912, USA}
\email{bviray@math.brown.edu}
\urladdr{http://math.brown.edu/\~{}bviray}
\thanks{The second author was supported by NSF grant DMS-1103831.  The fourth author was partially supported by NSF grant DMS-1069236 and by a NSERC PGSD grant. The last author was partially supported by NSF grant DMS-1002933 and ICERM}
\date{}
\begin{document}

	\begin{abstract}
		Bruinier and Yang conjectured a formula for intersection numbers on an arithmetic Hilbert modular surface, and as a consequence obtained a conjectural formula for $\CM(K).\textup{G}_1$ under strong assumptions on the ramification in $K$.  Yang later proved this conjecture under slightly stronger assumptions on the ramification.  In recent work,  Lauter and Viray proved a different formula  for $\CM(K).\textup{G}_1$ for primitive quartic CM fields with a mild assumption, using a method of proof independent from that of Yang.   
In this paper we show that these two formulas agree, for a class of primitive quartic CM fields which is slightly larger than the intersection of the fields considered by Yang and Lauter and Viray.  Furthermore, the proof that these formulas agree does \emph{not} rely on the results of Yang or Lauter and Viray. As a consequence of our proof, we conclude that the Bruinier-Yang formula holds for a slightly largely class of quartic CM fields $K$ than what was proved by Yang, since it agrees with the Lauter-Viray formula, which is proved in those cases. The factorization of these  intersection numbers has applications to cryptography: precise formulas for them allow one to compute the denominators of Igusa class polynomials, which has important applications to the construction of genus $2$ curves for use in cryptography.
		
	\end{abstract}

	\maketitle

	\section{Introduction}

		In this paper we study the relationship between two formulas proved for arithmetic intersection numbers on the Siegel moduli space of principally polarized abelian surfaces. Specifically, these are formulas for the arithmetic intersection of the CM points of $K$, denoted by $\CM(K)$, with the Humbert surface $\textup{G}_1$, which parametrizes abelian surfaces isomorphic to a product of elliptic curves with the product polarization; the $\ell$-part of this arithmetic intersection number is denoted $(\CM(K).\textup{G}_1)_{\ell}$.  

		The study of these particular intersection numbers was largely motivated by applications to cryptography.  In order to generate genus $2$ curves over a finite field whose Jacobians have prime order, the {\it CM method} proceeds by computing the minimal polynomials of the invariants of the genus $2$ curves with CM by a primitive quartic CM field $K$.  These minimal polynomials are analogous to the Hilbert class polynomials for imaginary quadratic fields $K$. Indeed, Igusa defined a collection of invariants for genus $2$ curves and proved expressions for them in terms of quotients of Siegel modular forms. For genus $2$ curves with complex multiplication (CM) by a primitive quartic CM field $K$, these invariants lie in the Hilbert class field of the reflex field of $K$, and their minimal polynomials, {\it Igusa class polynomials}, have coefficients which are rational, not necessarily integral as is the case for Hilbert class polynomials related to invariants of elliptic curves.

		Ignoring cancellation with numerators, the primes which appear in the denominators of Igusa class polynomials are those which appear in $\left(\CM(K). G_1\right)$, the arithmetic intersection on the Siegel moduli space of the divisor of the Siegel modular form $\chi_{10}$ with the CM points of $K$. In~\cite{GL}, it was proved that these primes are those $\ell$ for which there is a solution to an {\it Embedding Problem}, that is, there exists an embedding of $\OO_K$ into $M_2(\BBl)$ with certain properties.  Studying this embedding problem,~\cite{GL} gave a bound on the primes which can appear, and~\cite{GL10} gave a bound on the powers to which they can appear.

		At the same time, Bruinier and Yang, using methods from Arakelov intersection theory, gave a conjectural exact formula for the factorization of the denominators under certain conditions on the ramification in the primitive quartic CM field $K$~\cite{BY}. They assume that the discriminant of $K$ is $D^2\Dtilde$, where $D$ and $\Dtilde$ are both primes congruent to $1 \pmod{4}$. In~\cite{Yang-delta1, Yang-HilbertModularAndFaltings}, Yang proved the conjectured intersection formula assuming the ring of integers of $K$ is generated by one element over the ring of integers of the real quadratic subfield.  Yang's proof uses results of Gross-Keating, and then computes local densities by evaluating certain local integrals over the quaternions.

		In practice, very few primitive quartic CM fields have ramification of such restricted form. In~\cite{WIN}, Grundman, Johnson-Leung, Salerno, Wittenborn, and the third and last author studied all $13$ quartic cyclic CM fields in van Wamelen's tables of CM genus 2 curves defined over $\Q$, compared denominators with the number of solutions to the Embedding Problem and Bruinier and Yang's formula, and found that the Bruinier-Yang formula does not hold in general as stated when the assumptions on the ramification of $K$ are relaxed.  For applications to the computation of genus $2$ curves for cryptography, it is important to have a precise formula for the denominators of Igusa class polynomials which holds for general primitive quartic CM fields.

		In~\cite{LV-Igusa}, the third and last author proved a formula for $\left(\CM(K). G_1\right)$ for primitive quartic CM fields $K$ with almost no assumptions on $K$.  A simplified version of this formula holds with an extra mild assumption.  The proof of their formula follows from parameterizing solutions to the Embedding Problem by pairs of endomorphisms of a supersingular elliptic curve $E$, $x, u \in \End(E)$ with a fixed norm and trace.  This, in turn, is related to a counting problem studied by Gross and Zagier in their formula for the factorization of differences of singular moduli.

		The formula given by Bruinier and Yang is strikingly similar to the simplified version of the formula in~\cite{LV-Igusa}.  Indeed, both formulas involve two nested sums where the summand is a product that includes the number of ideals of a given norm.  However, the Bruinier-Yang formula counts ideals in a quartic CM field, whereas the Lauter-Viray formula counts ideals in an imaginary quadratic order.  

		In this paper, we show that the formulas of Bruinier-Yang (BY) and Lauter-Viray (LV) agree, \emph{without} using that the formulas compute the same arithmetic intersection number.  As a consequence of our result, we conclude that the BY formula holds for a slightly larger class of quartic CM fields $K$ than what was proved by Yang.  See~\S\ref{sec:EqualityIndices} for more details.

		\subsection{Idea of proof}
			The BY formula sums over elements in $\Ftilde$, the real quadratic subfield of $\Ktilde$, the reflex field of $K$, counting the number of ideals of $\Ktilde$ with certain norms, with a certain multiplicity.  The LV formula sums over certain integers which turn out to be in one-to-one correspondence with the elements of $\Ftilde$ which arise in BY, under the assumptions on the ramification of $K$ (see Proposition~\ref{prop:EquivCondOnn}).  For each such integer $n$ in the LV formula, two related imaginary quadratic fields are defined, with suborders of discriminants $d_u = d_u(n)$ and $d_x = d_x(n)$ respectively.  The LV formula counts ideals in $\Z[(d_u + \sqrt{d_u})/2]$ with norm equal to $N$, a quantity related to $n$, with certain multiplicities. The heart of our proof of the equality of these two formulas is Proposition~\ref{prop:Splitting}, which shows how the splitting behavior of certain primes in the quadratic extension $\Ktilde/\Ftilde$ is related to the splitting behavior of certain primes in the quadratic extensions $\Q(\sqrt{d_u})$ and $\Q(\sqrt{d_x})$. Thus the results of this paper can be viewed as a kind of ``reciprocity'' between splitting behavior of certain primes in different quadratic extensions.  The resulting equality of local factors in the BY and LV formulas also involves the multiplicities which appear in the LV formula arising from genus theory, and indeed our proofs rely heavily on computations of related Hilbert symbols.

		\subsection{Outline of paper}
			In \S\S\ref{sec:BY} and \ref{sec:LV}, we precisely state the BY formula and the LV formula, respectively.  We also prove that these formulas can be expressed as a product of local factors, which will be instrumental in the proof of our main result.

			Both formulas rely on a relative integral basis for the ring of integers of $K$ over the ring of integers of the real quadratic subfield $F$.  In~\S\ref{sec:IntBasis} we give the possibilities for this integral basis under the assumption that $D$ is prime and $\Dtilde$ is squarefree.  

			We precisely state our main result in~\S\ref{sec:EqualityIndices} and begin the proof by showing that the BY formula and LV formula both sum over the same indices.  The crux of the proof is in~\S\ref{sec:EqualitySummands}, where we show that the summands in the BY formula and LV formula agree by comparing the local factors.  
		\subsection{Notation}
			Let $F/\Q$ denote a real quadratic field, and let 
			\[
				D = \Disc_{F/\Q}(\calO_F).
			\]
			Let $A, B\in \frac12\Z$ be such that $A + B\sqrt{D}\in\OO_F$ and $K = F(\sqrt{A + B\sqrt{D}})$ is a totally imaginary quadratic extension of $F$. Throughout this paper, $K$ will be assumed to be a \emph{primitive} quartic CM field, which is the case if it is either non-Galois or Galois cyclic \cite[Ch. II, \S 8.4]{Shimura}. Write $$\Dtilde = \Norm_{F/\Q}(\Disc_{K/F}(\OO_K))$$ and write $\Ftilde = \Q(\sqrt{\Dtilde})$. Let $\Ktilde$ denote the reflex field of $K$. After possibly changing the CM-type of $K$, one can check that $\Ktilde = \Ftilde(\sqrt{2A + 2\sqrt{A^2 - B^2D}}).$  Denote the relative discriminant of $\Ktilde/\Ftilde$ by 
			\[
				\frakD_{\Ktilde/\Ftilde}=\Disc_{\Ktilde/\Ftilde}(\calO_\Ktilde).
			\]

	\section*{Acknowledgements}
		This project was started during the Women in Numbers 2 workshop at the Banff International Research Station; we thank the workshop organizers, Chantal David, Matilde Lal\'in, and Michelle Manes, and the staff at BIRS for their support. 

	\section{The Bruinier-Yang formula}\label{sec:BY}

		In this section, we describe the formula for $(\CM(K).\textup{G}_1)_{\ell}$ that was conjectured by Bruinier and Yang~\cite{BY} and later proved by Yang~\cite{Yang-HilbertModularAndFaltings}. One of the factors (which we denote by $R_{\Ktilde/\Ftilde}$) in this intersection formula is multiplicative, so it makes sense to express it in terms of local factors over each prime. The key result of this section is Theorem \ref{T: localBY}, which does exactly this. We start by recalling the necessary definitions to state the formula of Bruinier and Yang.

		\subsection{Yang's Theorem}
			\begin{defn}
				Let $\fraka \subseteq \Ftilde$ be an ideal.  We define 
				\[
					R_{\Ktilde/\Ftilde}(\fraka) := 
					\#\{\frakn \subset \calO_{\Ktilde} \mid 
					\Norm_{\Ktilde/\Ftilde}(\frakn) = \fraka \}.
				\]
			\end{defn}

			\begin{theorem}~\cite[Thm. 1.2]{Yang-HilbertModularAndFaltings}
				\label{thm:BYFormula}
				Assume that $D$ and $\Dtilde$ are congruent to $1$ modulo $4$ and prime.  Further assume that $\calO_K = \calO_F[(w + \sqrt{A + B \sqrt D})/2]$ for some $w\in\OO_F$; this implies that $\Dtilde = A^2 - B^2D$.  Then for each rational prime $\ell$, ${(G_1.CM(K))_{\ell}}/({\log \ell})$ equals
				\begin{equation}\label{eq:BYFormula}
					\sum_{\delta = \frac{D - x^2}4 \in \Z_{\geq0}} \qquad 
					\sum_{\substack{n \textup{ s.t. } \frac{n + \delta\sqrt{\Dtilde}}{2D} \in \frakD_{\Ktilde/\Ftilde}^{-1}\\ |n|<\delta\sqrt{\Dtilde}}} B_{\frac{n + \delta\sqrt{\Dtilde}}{2D}}(\ell),
				\end{equation}
				where $x$ is some integer, and
				\[
				 B_t(\ell) = \sum_{\frakl|\ell}
					\begin{cases}
					0 & \textup{ if } \mathfrak{l} \textup{ splits in } \widetilde{K}\\
					\frac12(v_{\mathfrak{l}}(t)+1)R_{\Ktilde/\Ftilde}(t \frakD_{\widetilde{K}/\widetilde{F}}\mathfrak{l}^{-1})f(\mathfrak{l}/\ell) & \textup{ otherwise},
					\end{cases}
				\]
				where the sum ranges over prime ideals $\frakl$ of $\OO_\Ftilde$ lying over $\ell$ and $f(\frakl/\ell)$ denotes the inertial degree of $\frakl$ over $\calO_F$.
			\end{theorem}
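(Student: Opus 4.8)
Since the statement is \cite[Thm.~1.2]{Yang-HilbertModularAndFaltings}, the task is really to recall how such a formula is obtained, and the plan is to follow Yang's Kudla-program strategy. First I would transport $(\textup{G}_1.\CM(K))_\ell$ to an integral model $\mathcal{M}$ of the Hilbert modular surface attached to $\Ftilde$. Under the monogenic hypothesis $\calO_K = \calO_F[(w+\sqrt{A+B\sqrt D})/2]$ the reflex of $K$ cuts out a CM $0$-cycle on $\mathcal{M}$, and the Humbert divisor $\textup{G}_1 = \divv(\chi_{10})$ pulls back, along the morphism from $\mathcal{M}$ into the Siegel threefold, to a weighted sum of Hirzebruch--Zagier (Borcherds) divisors $T_m$ whose indices $m$ are in bijection with the indices $\delta = (D-x^2)/4 \in \Z_{\ge 0}$ of the outer sum in \eqref{eq:BYFormula}. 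This reduces matters to computing $\widehat{T}_m . \CM(K)$ for each such $m$, for a suitable choice of Green function on $T_m$.

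Next, following Bruinier, I would equip each $T_m$ with the automorphic Green function produced by the regularized theta lift of a weakly holomorphic modular form of weight $0$, giving arithmetic divisors $\widehat{T}_m \in \widehat{\Pic}(\mathcal{M})$. The arithmetic intersection number $\widehat{T}_m . \CM(K)$ then breaks into an archimedean contribution, built from the values of this Green function at the CM points, and a finite contribution $\sum_{\mathfrak p} m_{\mathfrak p}\log|\mathfrak p|$ recording local intersection multiplicities at primes of bad reduction. For the archimedean part I would invoke Schofer's CM-value formula, which expresses the sum of Green-function values at the CM points as a Fourier coefficient of the central derivative at $s = 0$ of an incoherent Hilbert Eisenstein series $E(\tau, s)$ over $\Ftilde$ attached to $K$. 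For the finite part I would use the deformation theory of $p$-divisible groups together with the Gross--Keating formula: $m_{\mathfrak p}$ is given (up to an elementary factor) by a Gross--Keating invariant of a ternary quadratic lattice, equivalently the derivative of a local representation density, and the hypotheses that $D$ and $\Dtilde$ are prime and $\equiv 1 \bmod 4$ are precisely what render these lattices simple enough to evaluate the densities in closed form.

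Finally I would assemble the pieces: the generating series $\sum_m \big(\widehat{T}_m.\CM(K)\big)\,q^m$ is a modular form (either by the modularity of Kudla's arithmetic special-cycle generating series, or by a direct coefficient-by-coefficient comparison), and by the two computations above it equals, up to a constant, the central derivative $E'(\tau, 0)$; unwinding its Fourier coefficients under the running hypotheses recovers exactly the quantities $B_t(\ell)$ of the statement, where $t = \tfrac{n + \delta\sqrt{\Dtilde}}{2D}$ runs over the elements of $\frakD_{\Ktilde/\Ftilde}^{-1}$ with $|n| < \delta\sqrt{\Dtilde}$. Summing over $m$ with the weights coming from the pullback of $\divv(\chi_{10})$ then yields \eqref{eq:BYFormula}.

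The main obstacle is the computation of the non-archimedean local multiplicities $m_{\mathfrak p}$: one must analyze the bad reduction of $\mathcal{M}$ and of the special cycles at the primes dividing $D$, $\Dtilde$, and $\ell$, rewrite each local intersection via Gross--Keating invariants, and match the resulting densities with the Fourier coefficients of $E'(\tau, 0)$. This is exactly the step that forces the assumption that $\calO_K$ is monogenic over $\calO_F$ (so that the relevant ternary lattices admit a tractable normal form) and that $D,\Dtilde$ are prime; bypassing these restrictions for a larger class of $K$, by instead comparing with the Lauter--Viray formula, is the purpose of the rest of this paper.
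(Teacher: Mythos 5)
The paper does not prove this theorem---it is Yang's Theorem~1.2, imported verbatim with a citation, and the paper's only comment on its proof (in the introduction) is that Yang ``uses results of Gross--Keating, and then computes local densities by evaluating certain local integrals over the quaternions.'' You correctly recognize this and offer a sketch of Yang's actual argument rather than an independent proof. Your outline is consistent with Yang's published strategy in the Kudla program: pullback of $\divv(\chi_{10})$ to a weighted sum of Hirzebruch--Zagier divisors on the Hilbert modular surface for $\Ftilde$, automorphic Green functions from regularized theta lifts, Schofer's CM-value formula for the archimedean contribution, Gross--Keating invariants and local representation densities for the finite places, and matching against Fourier coefficients of the central derivative of an incoherent Hilbert Eisenstein series. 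Since there is no internal proof in this paper to compare against, the only judgment to render is that your sketch is a fair roadmap, not a verifiable argument.

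One point worth making explicit: the ramification hypotheses ($D, \Dtilde$ prime and $\equiv 1 \bmod 4$, $\calO_K$ monogenic over $\calO_F$) are used not only to make the local density computations tractable, as you note, but also to supply structural facts the shape of the formula already presupposes---most notably $\Norm(\frakD_{\Ktilde/\Ftilde}) = D$ (see the Remarks immediately following the theorem), which is what makes the inner sum over $n$ with $\tfrac{n+\delta\sqrt{\Dtilde}}{2D}\in\frakD_{\Ktilde/\Ftilde}^{-1}$ well posed. These should be flagged as inputs to the unwinding of the Eisenstein coefficients, not merely as simplifications of the density calculation; indeed, it is precisely these hypotheses that the rest of this paper works to relax by comparison with the Lauter--Viray formula.
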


			\begin{remarks}\ 
				\label{R: BruinierYangNorm}
				\begin{enumerate}
					\item According to~\cite{KW}, if $A^2 -B^2 D$ is not a square, then $K$ is primitive, so this assumption is certainly satisfied if $\Dtilde$ is prime or squarefree.
					\item In Lemma 4.4 and Corollary 4.5 of \cite{BY}, it is proved that under the assumptions of the above theorem, $\Norm(\frakD_{\Ktilde/\Ftilde}) = D$. 
					\item If $D = 5$ so that the only value for $\delta$ is $1$, then Yang~\cite{Yang-delta1} showed that the same statement holds if the assumption that $\Dtilde$ is $1$ modulo $4$ and prime is replaced with the assumption that $\Dtilde$ is $1$ modulo $4$ and squarefree.
				\end{enumerate}
			\end{remarks}

		\subsection{A local interpretation of the Bruinier-Yang formula}
			\begin{defn}
			Let $p$ be a rational prime, and let $\fraka$ be an ideal in $\Ftilde$. Then we define: 
				\[
					\varepsilon_{\Ktilde/\Ftilde}(p, \fraka) := 
						\prod_{\pp|p, v_{\pp}(\fraka)> 0}
						\begin{cases}
							\frac12(1 + (-1)^{v_{\pp}(\fraka)}) & 
								\textup{if }\pp\textup{ is inert in }\Ktilde\\
							v_{\pp}(\fraka) + 1 & 
								\textup{if }\pp\textup{ is split in }\Ktilde\\
							1 & \textup{otherwise}.
						\end{cases}
				\]
			\end{defn}
			Let $n\in\Z$ be such that $|n|< \delta\sqrt{\Dtilde}$ and such that $\frakN := (\frac{n + \delta\sqrt{\Dtilde}}{2D})$ divides $\frakD_{\Ktilde/\Ftilde}^{-1}$.  Assume that $\Norm(\frakD_{\Ktilde/\Ftilde}) = D$.  Then we define 
			\[
				N := \Norm(\frakN\frakD_{\Ktilde/\Ftilde}) = \frac{\delta^2\Dtilde - n^2}{4D}.
			\]

			\begin{thm}\label{T: localBY}\label{thm:BYLocal}
				  Let $\ell$ be a rational prime. If $v_{\frakl}(\frakN\frakD_{\Ktilde/\Ftilde}) = 0$ for all primes $\frakl|\ell$ in $\OO_{\Ftilde}$, then $B_{\frakN}(\ell) = 0$.
				Assume that there exists exactly one prime $\frakl|\ell$ in $\OO_{\Ftilde}$ such that $v_{\frakl}(\frakN\frakD_{\Ktilde/\Ftilde}) >0$ and this prime $\frakl$ is unramified in $\Ktilde$. Then
				\begin{equation}\label{eq:BYFormulaLocal}
					B_{\frakN}(\ell) = 
					\begin{cases}
						\frac{v_{\mathfrak{l}}(\frakN)+1}2f(\mathfrak{l}/l)\prod_{p|N, p\neq \ell}\varepsilon_{\Ktilde/\Ftilde}(p, \frakN  \frakD_{\Ktilde/\Ftilde}) & \textup{if }\frakl\textup{ is inert in }\Ktilde/\Ftilde,\\
						&\textup{and }v_{\frakl}(\frakN)\equiv 1 \bmod 2\\
						0 &\textup{otherwise}.
					\end{cases}
				\end{equation}
			\end{thm}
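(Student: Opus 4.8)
The plan is to reduce everything to the multiplicativity of $R_{\Ktilde/\Ftilde}$ together with an explicit list of its local factors. First I would record that for a prime $\pp$ of $\calO_{\Ftilde}$ and an integer $a\ge 0$,
\[
	R_{\Ktilde/\Ftilde}(\pp^{a})=
	\begin{cases}
		\frac12\bigl(1+(-1)^{a}\bigr) & \textup{if }\pp\textup{ is inert in }\Ktilde/\Ftilde,\\
		a+1 & \textup{if }\pp\textup{ is split in }\Ktilde/\Ftilde,\\
		1 & \textup{if }\pp\textup{ is ramified in }\Ktilde/\Ftilde,
	\end{cases}
\]
which is immediate by enumerating the integral ideals $\frakn\subseteq\calO_{\Ktilde}$ with $\Norm_{\Ktilde/\Ftilde}(\frakn)=\pp^{a}$ in each case; together with the multiplicativity $R_{\Ktilde/\Ftilde}(\fraka\frakb)=R_{\Ktilde/\Ftilde}(\fraka)R_{\Ktilde/\Ftilde}(\frakb)$ for coprime $\fraka,\frakb$ and the convention $R_{\Ktilde/\Ftilde}(\fraka)=0$ when $\fraka$ is not integral, this determines $R_{\Ktilde/\Ftilde}$ on all fractional ideals of $\calO_{\Ftilde}$. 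Comparing with the definition of $\varepsilon_{\Ktilde/\Ftilde}$ then shows that for any integral ideal $\fraka$ of $\calO_{\Ftilde}$,
\[
	\varepsilon_{\Ktilde/\Ftilde}(p,\fraka)=\prod_{\pp\mid p}R_{\Ktilde/\Ftilde}\bigl(\pp^{v_{\pp}(\fraka)}\bigr),
\]
that is, $\varepsilon_{\Ktilde/\Ftilde}(p,\fraka)$ is exactly the ``$p$-primary part'' of $R_{\Ktilde/\Ftilde}(\fraka)$.

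With these facts the argument is essentially bookkeeping. For the first assertion, suppose $v_{\frakl}(\frakN\frakD_{\Ktilde/\Ftilde})=0$ for every $\frakl\mid\ell$. In $B_{\frakN}(\ell)=\sum_{\frakl\mid\ell}(\cdots)$ the summand attached to a split $\frakl$ vanishes by definition, while for a non-split $\frakl$ the factor $R_{\Ktilde/\Ftilde}(\frakN\frakD_{\Ktilde/\Ftilde}\frakl^{-1})$ vanishes because this fractional ideal has valuation $-1$ at $\frakl$ and hence is not integral; so $B_{\frakN}(\ell)=0$. For the main assertion the same observation kills every summand attached to a prime $\frakl'\mid\ell$ with $\frakl'\neq\frakl$ (by hypothesis $v_{\frakl'}(\frakN\frakD_{\Ktilde/\Ftilde})=0$ for such $\frakl'$), so $B_{\frakN}(\ell)$ equals its summand at the distinguished prime $\frakl$. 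If $\frakl$ is split in $\Ktilde/\Ftilde$ this summand is $0$; if $\frakl$ is inert it equals $\frac12\bigl(v_{\frakl}(\frakN)+1\bigr)R_{\Ktilde/\Ftilde}(\frakN\frakD_{\Ktilde/\Ftilde}\frakl^{-1})f(\frakl/\ell)$.

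It then remains to compute $R_{\Ktilde/\Ftilde}(\frakN\frakD_{\Ktilde/\Ftilde}\frakl^{-1})$ in the inert case by multiplicativity. Since $\frakl$ is unramified in $\Ktilde/\Ftilde$ we have $v_{\frakl}(\frakD_{\Ktilde/\Ftilde})=0$, so the $\frakl$-part of $\frakN\frakD_{\Ktilde/\Ftilde}\frakl^{-1}$ is $\frakl^{v_{\frakl}(\frakN)-1}$, which (being inert) contributes the factor $\frac12\bigl(1+(-1)^{v_{\frakl}(\frakN)-1}\bigr)$: this is $0$ when $v_{\frakl}(\frakN)$ is even, forcing $B_{\frakN}(\ell)=0$, and $1$ when $v_{\frakl}(\frakN)$ is odd. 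In the odd case the remaining factor is $\prod_{\qq\nmid\ell}R_{\Ktilde/\Ftilde}\bigl(\qq^{v_{\qq}(\frakN\frakD_{\Ktilde/\Ftilde})}\bigr)$, and because $N=\Norm(\frakN\frakD_{\Ktilde/\Ftilde})$ and $\frakl$ is the only prime over $\ell$ dividing $\frakN\frakD_{\Ktilde/\Ftilde}$, the primes $\qq\nmid\ell$ with $v_{\qq}(\frakN\frakD_{\Ktilde/\Ftilde})>0$ are precisely the primes of $\calO_{\Ftilde}$ lying over rational primes $p\mid N$ with $p\neq\ell$. Grouping the product by $p$ and applying the displayed identity for $\varepsilon_{\Ktilde/\Ftilde}$ rewrites this factor as $\prod_{p\mid N,\,p\neq\ell}\varepsilon_{\Ktilde/\Ftilde}(p,\frakN\frakD_{\Ktilde/\Ftilde})$, and substituting yields the claimed formula.

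I expect no serious difficulty here; the points that need care are (i) keeping track of how the relative valuation at $\frakl$ changes when the factor $\frakl$ is removed --- the hypothesis that $\frakl$ is unramified in $\Ktilde/\Ftilde$ is precisely what makes $v_{\frakl}(\frakN)=v_{\frakl}(\frakN\frakD_{\Ktilde/\Ftilde})$ --- and (ii) matching the index set of the product of $\varepsilon$-factors over rational primes dividing $N$ with the index set of local factors of $R_{\Ktilde/\Ftilde}$ over primes of $\calO_{\Ftilde}$. Once the local description and multiplicativity of $R_{\Ktilde/\Ftilde}$ are in place, the theorem is just an unwinding of the definitions of $B_{\frakN}$ and $\varepsilon_{\Ktilde/\Ftilde}$.
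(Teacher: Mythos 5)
Your proof is correct and follows essentially the same route as the paper's: use multiplicativity of $R_{\Ktilde/\Ftilde}$ and an explicit case-by-case computation of $R_{\Ktilde/\Ftilde}(\pp^a)$ (inert/split/ramified), then match the $p$-primary parts with $\varepsilon_{\Ktilde/\Ftilde}(p,\cdot)$. The only minor imprecision is the claim that the primes $\qq\nmid\ell$ with $v_{\qq}(\frakN\frakD_{\Ktilde/\Ftilde})>0$ are ``precisely'' the primes over $p\mid N$, $p\neq\ell$ (in fact they are contained in that set), but since $R_{\Ktilde/\Ftilde}(\pp^{0})=1$ this does not affect the computation.
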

			\begin{proof}
				If $v_{\frakl}(\frakN \frakD_{\Ktilde/\Ftilde}) = 0$ for all primes $\frakl$ in $\OO_{\Ftilde}$ lying over $\ell$, then $\frakl^{-1}\frakN\frakD_{\Ktilde/\Ftilde}$ is not integral so it cannot be the norm of an integral ideal in $\OO_{\Ktilde}$.  Thus, $R_{\Ktilde/\Ftilde}(\frakl^{-1}\frakN\frakD_{\Ktilde/\Ftilde}) = 0$ for all $\frakl |\ell$ and so $B_{\frakN}(\ell) = 0$.

				Henceforth, assume that there exists exactly one prime $\frakl|\ell$ in $\OO_{\Ftilde}$ such that $v_{\frakl}(\frakN\frakD_{\Ktilde/\Ftilde}) >0$ and this prime $\frakl$ is unramified in $\Ktilde$.  Then for any $\calO_{\Ftilde}$-prime $\frakl'|\ell$, $\frakl'\neq \frakl$, the ideal  $\frakl'^{-1}\frakN\frakD_{\Ktilde/\Ftilde}$ is not integral and so $R_{\Ktilde/\Ftilde}(\frakl'^{-1}\frakN\frakD_{\Ktilde/\Ftilde}) = 0$.  Thus we have the simplified expression
				\[
					 B_{\frakN}(\ell) = 
						\begin{cases}
							0 & \textup{ if } \mathfrak{l} \textup{ splits in } 
								\widetilde{K}\\
						\frac12(v_{\mathfrak{l}}(\frakN)+1)R_{\Ktilde/\Ftilde}(\frakN \frakD_{\widetilde{K}/\widetilde{F}}\mathfrak{l}^{-1})f(\mathfrak{l}/\ell) & \textup{ otherwise}.
						\end{cases}
				\]

				To prove the local formula, first assume that $\frakl$ is not inert in $\Ktilde/\Ftilde$. By assumption, $\frakl$ is also unramified, so $\frakl$ must be split and $B_{\frakN}(\ell) = 0$.  If $\frakl$ is inert in $\Ktilde/\Ftilde$ and $v_{\frakl}(\frakN) = v_{\frakl}(\frakN \frakD_{\Ktilde/\Ftilde})$ is even, then $v_{\frakl}(\frakl^{-1}\frakN\frakD_{\Ktilde/\Ftilde})$ is odd. But since $\frakl$ is inert, the $\frakl$-valuation of $\Norm_{\Ktilde/\Ftilde}(\mathfrak{B})$ is even for any ideal $\mathfrak{B}$ of $\Ktilde.$  Thus $R(\frakl^{-1}\frakN\frakD_{\Ktilde/\Ftilde}) = 0 = B_{\frakN}(\ell)$.

				From now on, we may assume that $\frakl$ is inert in $\Ktilde$ and that $v_{\frakl}(\frakN)$ is odd.  Recall that by definition, we have $R_{\Ktilde/\Ftilde}(\frakl^{-1}\frakN\frakD_{\Ktilde/\Ftilde} ) = \#\{\fraka \subset \calO_{\Ktilde} \mid \Norm_{\Ktilde/\Ftilde}(\fraka) = \frakN \frakD_{\Ktilde/\Ftilde}\frakl^{-1}\}.$

				By the unique factorization of ideals in $\OO_{\Ktilde}$ and $\OO_{\Ftilde}$, 
				\[
					R_{\Ktilde/\Ftilde}(\frakN \frakD_{\Ktilde/\Ftilde}\frakl^{-1})
					= \prod_{p}\prod_{\pp|p}R_{\Ktilde/\Ftilde}(\frakp^{v_\pp(\frakN \frakD_{\Ktilde/\Ftilde}\frakl^{-1})}).
				\]
				Since the ideal $\frakN \frakD_{\Ktilde/\Ftilde}\frakl^{-1}$ is integral, we need only consider rational primes $p$ such that $p | N$.  Additionally, the factor at $\ell$ is equal to $1$, so 	
				\[
					R_{\Ktilde/\Ftilde}(\frakN \frakD_{\Ktilde/\Ftilde}\frakl^{-1})
					= \prod_{p|N, p\ne\ell}\prod_{\pp|p}R_{\Ktilde/\Ftilde}(\frakp^{v_\pp(\frakN \frakD_{\Ktilde/\Ftilde})}).
				\]
				Then it suffices to show that $\prod_{\pp|p}R_{\Ktilde/\Ftilde}(\frakp^{v_\pp(\frakN \frakD_{\Ktilde/\Ftilde})}) = \varepsilon_{\Ktilde/\Ftilde}(p, \frakN\frakD_{\Ktilde/\Ftilde})$ for $p\ne \ell$.

				Let $\pp | p$ be a prime ideal in $\calO_{\Ftilde}$. Let $v = v_{\pp}(\frakN \frakD_{\Ktilde/\Ftilde})$. If $\pp$ is inert in $\Ktilde$, then there is a unique prime $\frakP$ lying over $\pp$ and $\Norm_{\Ktilde/\Ftilde}(\frakP) = \pp^2$. Thus if $v$ is odd, there are no ideals in $\calO_{\Ktilde}$ whose relative norm has $\pp$-adic valuation $v$, and if $v$ is even, $R_{\Ktilde/\Ftilde}(\frakp^{v}) = 1$. Now suppose that $\pp$ splits in $\Ktilde$. Then we can write $\pp = \frakP_1 \frakP_2$ and $\Norm_{\Ktilde/\Ftilde}(\frakP_i) = \pp$ so the only ideals in $\OO_{\Ktilde}$ with relative norm equal to $\pp^v$ are of the form $\frakP_1^{n_1}\frakP_2^{n_2}$, where $n_1 + n_2 = v$ and $0 \leq n_1, n_2 \leq v$.  Thus $R_{\Ktilde/\Ftilde}(\frakp^{v_\pp(\frakN \frakD_{\Ktilde/\Ftilde})}) = v + 1$.  Finally, if $\pp \OO_{\Ktilde} = \frakP^2$ is ramified, then the only ideal in $\OO_{\Ktilde}$ with relative norm $\pp^v$ is $\frakP^v$, so $R_{\Ktilde/\Ftilde}(\frakp^{v_\pp(\frakN \frakD_{\Ktilde/\Ftilde})}) = 1$. We observe that this matches the expression for $\varepsilon_{\Ktilde/\Ftilde}(p, \frakN)$ exactly.
			\end{proof}
	
	\section{The Lauter-Viray formula}\label{sec:LV}
		In this section, we describe the formula for $(\CM(K).\textup{G}_1)_{\ell}$ proved by the third and last author~\cite{LV-Igusa}.  As in the Bruinier-Yang formula, some of the factors in this intersection formula are multiplicative. The key result of this section is Theorem~\ref{thm:LVLocal} where we show that the formula in~\cite{LV-Igusa} has an expression involving products of local factors. 

		\subsection{A simplified version of the Lauter-Viray formula}
		Throughout, we assume that $\OO_K$ is freely generated over $\OO_F$ and write $\eta$ for a generator, i.e., $\OO_K = \OO_F[\eta]$.  We define integers $\alpha_0,\alpha_1,\beta_0, \beta_1$ (depending on $\eta$) satisfying
		\[
			\Tr_{K/F}(\eta) = \alpha_0 + \alpha_1\frac{D + \sqrt{D}}{2},
			\textup{ and }
			\Norm_{K/F}(\eta) = \beta_0 + \beta_1\frac{D + \sqrt{D}}{2}.
		\]
		Let $\ell$ be a rational prime and let $\delta$ be a positive integer such that $D - 4\delta$ is a square.  We define
		\[
			c_K  := \alpha_0^2 + \alpha_0\alpha_1D 
				+ \alpha_1^2\frac{D^2 - D}{4} - 4\beta_0 - 2\beta_1D.
		\]
		For any integer $n$ such that $2D | (n + c_K)$ and $\frac{\delta^2\Dtilde - n^2}{4D}$ is a positive integer, we define 
		\begin{align*}
			t_u & := \alpha_1\delta,\\
			t_x & := \alpha_0 + \frac12(D - \sqrt{D - 4\delta})\alpha_1,\\
			d_u(n) & := (\alpha_1\delta)^2 + 4\frac{(n + c_K)\delta}{2D},\\
			d_x(n) & := (\alpha_0 + \frac12(D - \sqrt{D - 4\delta})\alpha_1)^2
				- 4\left(\beta_0 + \frac12(D - \sqrt{D - 4\delta})\beta_1 + \frac{n + c_K}{2D}\right),\\
			t_{xu^{\vee}}(n) & = \beta_1\delta 
				+ \sqrt{D - 4\delta}\frac{n + c_K}{2D}.
		\end{align*}
		The curious reader may refer to~\cite[\S2]{LV-Igusa} to see how these quantities arise.   

		\begin{theorem}~\cite[Thm. 2.10]{LV-Igusa}\label{thm:LVFormula}
			Assume that for every $\delta\in\Z_{>0}$ and $n\in\Z$ such that $D - 4\delta$ is a square, $2D|(n + c_K\delta)$ and $N := \frac{\delta^2\Dtilde - n^2}{4D} \in \Z_{>0}$, we have that $\ell$ does not divide both $\delta$ and $N$ and that $d_u(n)$ is a fundamental discriminant.  Then $(\CM(K).G_1)_{\ell}/(\log \ell)$ equals
			\begin{equation}\label{eq:LVFormula}
				\sum_{\underset{D - 4\delta = \square}{\delta\in\Z_{>0}}} \sum_{\substack{n \in\Z,\\2D|n + c_K\delta\\ \delta^2\Dtilde - n^2 \in {4D}\Z_{>0}}} \mu(n)\tilde\rho_{d_u(n)}(N)R(N\ell^{-1}), 
			\end{equation}
			where $R_{d}(A) = \#\{\frakb\subseteq \OO_d = \Z[\frac{d + \sqrt{d}}{2}] : \frakb \textup{ invertible }, \Norm(\frakb) = A\}$,
			\[
				\mu(n) = 
				\begin{cases}
					v_{\ell}(N) & \textup{if }\ell| \textup{gcd}(d_u(n),d_x(n)),\\
					\frac{v_{\ell}(N) + 1}2 & \textup{otherwise},
				\end{cases} \quad
				\tilde{\rho}_d(A) = 
				\begin{cases}
					0 & \textup{if }(d, -A)_p = -1\\
					&\textup{for some } p|d, p\ne\ell,\\
					2^{\#\{p : p|\textup{gcd}(d,A),p\ne\ell \}} & \textup{otherwise},
				\end{cases}
			\]
		and $(a,b)_p$ denotes the Hilbert symbol at $p$.
		\end{theorem}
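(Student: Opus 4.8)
The statement is a restatement of \cite[Thm.~2.10]{LV-Igusa} in the notation fixed above, so the plan is to (i) match the combinatorial data of this section to the objects appearing in \textit{loc.\ cit.}, and (ii) verify that the hypotheses recorded here are precisely those under which the general formula of \cite{LV-Igusa} collapses to the clean expression \eqref{eq:LVFormula}. For (i), I would first recall that the formula of \cite{LV-Igusa} for $(\CM(K).G_1)_\ell/\log\ell$ is a double sum, over $\delta$ with $D-4\delta$ a square and over an arithmetic progression of integers $n$, whose summand is a weighted count of pairs of endomorphisms $(x,u)$ of a fixed supersingular elliptic curve having prescribed norms and traces. The quantities $t_u,t_x,t_{xu^\vee}$ are the traces and $d_u(n),d_x(n)$ the discriminants of the imaginary quadratic orders generated by those endomorphisms. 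The bookkeeping step is then to substitute $\Tr_{K/F}(\eta)=\alpha_0+\alpha_1\frac{D+\sqrt D}2$ and $\Norm_{K/F}(\eta)=\beta_0+\beta_1\frac{D+\sqrt D}2$ into the defining expressions of \cite{LV-Igusa} and check that the five displayed formulas come out verbatim, with $\sqrt{D-4\delta}$ entering through the embedding of $\OO_F$ determined by $\delta$; one also checks that the congruence ``$2D\mid n+c_K\delta$'' is exactly the integrality constraint on $n$ in \cite{LV-Igusa}, the auxiliary constant $c_K$ being introduced precisely to put that constraint in this form.

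For (ii), the hypothesis $\ell\nmid\gcd(\delta,N)$ is what allows the $\ell$-part to be split off the relevant ideal-counting function, leaving the factor $R(N\ell^{-1})$; and the hypothesis that $d_u(n)$ is a fundamental discriminant is what replaces the weighted count over non-maximal orders in \cite{LV-Igusa} by the clean count $\tilde\rho_{d_u(n)}(N)$ over the maximal order $\OO_{d_u(n)}$. Under these hypotheses the Hilbert-symbol condition ``$(d_u,-N)_p=-1$ for some $p\mid d_u$, $p\ne\ell$'' becomes the statement that $N$ is not represented by the principal genus of $\OO_{d_u}$, and the power $2^{\#\{p\,:\,p\mid\gcd(d_u,N),\,p\ne\ell\}}$ is the usual genus-theoretic count of ambiguous classes in that setting; the multiplicity $\mu(n)$, with its dichotomy according to whether $\ell\mid\gcd(d_u(n),d_x(n))$, comes from the local analysis at $\ell$ carried out in \cite{LV-Igusa}.

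I expect the only real obstacle to be a faithful translation of normalizations: \cite{LV-Igusa} works in greater generality and carries several sign conventions and several factors of $2$ (both the $\tfrac12$'s and the $2^{(\cdot)}$'s), and one must track carefully what happens at $\ell$ when it divides any of $\delta$, $N$, $d_u(n)$, or $d_x(n)$. There is no new arithmetic input at this stage; the substantive work of the section is the passage to products of local factors in Theorem~\ref{thm:LVLocal}, for which the present theorem is the starting point.
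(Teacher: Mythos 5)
The paper offers no proof of this theorem; it is imported verbatim (modulo notational alignment) from \cite[Thm.~2.10]{LV-Igusa}, and you correctly read it that way. Your outline — match the traces/norms of $\eta$ and the quantities $d_u,d_x,t_{xu^\vee}$ to the objects of \textit{loc.\ cit.}, and observe that the stated hypotheses ($\ell\nmid\gcd(\delta,N)$ and $d_u(n)$ fundamental) are exactly what collapses the general formula of \cite{LV-Igusa} to the displayed form — is the right reading and agrees with the paper's treatment, which simply cites the source.

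One small caution on wording: the hypotheses here are slightly stronger than ``collapse conditions'' — in the language of \cite{LV-Igusa}, the fundamental-discriminant assumption is precisely what allows $\tilde\rho_{d_u(n)}$ and $R_{d_u(n)}$ to be taken in the maximal order $\OO_{d_u(n)}$ rather than a suborder, and $\ell\nmid\gcd(\delta,N)$ removes a correction term at $\ell$. Your identification of the $2^{\#\{\cdots\}}$ factor and the Hilbert-symbol vanishing condition with genus-theoretic representability is a fair gloss, though the paper neither states nor needs it; what the rest of \S\ref{sec:LV} actually does with Theorem~\ref{thm:LVFormula} is the local factorization in Theorem~\ref{thm:LVLocal}, which you correctly flag as the substantive step of the section.
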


		\subsection{A local interpretation of the Lauter-Viray formula}
			\begin{defn}
				Let $\OO_{d} = \Z[\frac{d + \sqrt{d}}{2}]$. Define 
				\[
					\varepsilon_{d}(p, A) = 
						\begin{cases}
							\frac12(1 + (-1)^{v_p(A)}) & 
								\textup{if }p\textup{ is inert in }\OO_{d}\\
							v_p(A) + 1 &
								\textup{if }p\textup{ is split in }\OO_{d}\\
							2 & 
								\textup{if }p|d\textup{ and }(d, - A)_p=1\\
							0 & \textup{otherwise}.
						\end{cases}\\
				\]
			\end{defn}
			\begin{thm}\label{thm:LVLocal}
			For any $\delta$, $n$, $N$ which arise as in Theorem~\ref{thm:LVFormula}, let $d_u = d_u(n)$ and $d_x = d_x(n)$. 
				Let $\ell$ be a prime that does not ramify in both $\OO_{d_u}$ and $\OO_{d_x}$.  Then
				\begin{equation}\label{eq:LVFormulaLocal}
					\mu(n)R_{d_u}(\ell^{-1}N)\tilde{\rho}_{d_u}(N) = 
					\begin{cases}
						\mu(n)\prod_{p|N, p\ne\ell}
						\varepsilon_{d_u}(p, N) & 
							\textup{if }\ell\textup{ is inert in }\OO_{d_u}
							\textup{ or }\OO_{d_x}\\
							&\textup{and }
							v_{\ell}(N) \equiv 1\bmod 2\\
						0 & \textup{otherwise}.
					\end{cases}
				\end{equation}
			\end{thm}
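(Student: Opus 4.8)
The plan is to mirror the proof of Theorem~\ref{T: localBY} almost line-for-line, since the statement of Theorem~\ref{thm:LVLocal} is the LV-side analogue of the BY-side local computation. The goal is to reduce the left-hand side to a product of the local quantities $\varepsilon_{d_u}(p,N)$ over $p \mid N$, $p \neq \ell$, after peeling off the factor at $\ell$ and verifying that the whole thing vanishes unless $\ell$ is inert in $\OO_{d_u}$ or $\OO_{d_x}$ with $v_\ell(N)$ odd. First I would dispose of the vanishing cases: if $\ell$ is split or ramified in $\OO_{d_u}$, then $R_{d_u}(\ell^{-1}N)$ is the count of invertible ideals of a given norm in an imaginary quadratic order, and the factor $R(N\ell^{-1})$ in the original LV formula — hence also in the rewritten form — forces us to examine whether $\ell^{-1}N$ is a norm; when $\ell$ is inert in $\OO_{d_u}$ but $v_\ell(N)$ is even, then $v_\ell(\ell^{-1}N)$ is odd, and since an inert prime contributes only even valuations to norms of ideals in $\OO_{d_u}$, we get $R_{d_u}(\ell^{-1}N) = 0$. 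That handles the "otherwise" branch except for the interaction with $\OO_{d_x}$, which I address below.

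Next I would establish the multiplicativity. Because $\OO_{d_u} = \Z[(d_u + \sqrt{d_u})/2]$ may be a non-maximal order, I cannot blindly invoke unique factorization of ideals; instead I would use that the counting function $R_{d_u}(\cdot)$ restricted to \emph{invertible} ideals is multiplicative on coprime arguments (invertible ideals in an order form a group, and their factorization into invertible prime ideals behaves well away from the conductor), together with the fact that $\ell$ being unramified in $\OO_{d_u}$ means $\ell$ is coprime to the conductor of $\OO_{d_u}$ — this last point is where the hypothesis "$\ell$ does not ramify in both $\OO_{d_u}$ and $\OO_{d_x}$" is used on the $u$-side. Writing $N\ell^{-v_\ell(N)} = \prod_{p \mid N, p \neq \ell} p^{v_p(N)}$, I get $R_{d_u}(\ell^{-1}N) = \prod_{p \mid N, p\neq \ell} R_{d_u}(p^{v_p(N)})$ (the factor at $\ell$ being $1$ once we are in the surviving case), and then a prime-by-prime analysis — exactly as in Theorem~\ref{T: localBY}: inert gives $\tfrac12(1+(-1)^{v_p(N)})$, split gives $v_p(N)+1$, and ramified gives either $1$ or $0$ — shows $R_{d_u}(p^{v_p(N)})\cdot(\text{genus factor at }p) = \varepsilon_{d_u}(p,N)$. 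The $2^{\#\{p\mid \gcd(d,N), p\neq \ell\}}$ in $\tilde\rho_{d_u}(N)$ and the Hilbert-symbol vanishing condition $(d_u, -N)_p = -1$ must be matched up with the "$2$ if $p \mid d$ and $(d,-A)_p = 1$, else $0$" clause of $\varepsilon_{d_u}$; this is a routine but careful bookkeeping of which primes $p \mid d_u$ are ramified in $\OO_{d_u}$ and what the local Hilbert symbol says about solvability of the relevant norm equation.

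**The main obstacle** I expect is the role of $d_x$ and the non-maximality of the orders. The statement allows $\ell$ to be inert in $\OO_{d_x}$ (not $\OO_{d_u}$) as a trigger for the nonzero branch, yet the right-hand side only involves $\varepsilon_{d_u}$; so I must show that when $\ell$ is inert in $\OO_{d_x}$ and $v_\ell(N)$ is odd, the behavior of $\ell$ in $\OO_{d_u}$ is automatically either inert (giving the stated product) or else the extra genus/Hilbert-symbol factors conspire so that the two expressions still agree. This presumably requires a relation between $d_u$, $d_x$, and $N$ — something like $d_u d_x \equiv (\text{square}) \cdot N^2$ or a congruence forcing the local conditions at $\ell$ to be linked, which should come from the explicit formulas for $d_u(n)$, $d_x(n)$, $t_{xu^\vee}(n)$ and the identity $N = \tfrac{\delta^2\Dtilde - n^2}{4D}$; very plausibly this is precisely the content of the forthcoming Proposition~\ref{prop:Splitting}. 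So the honest structure of the proof is: (1) reduce to the surviving case and multiplicativity as above; (2) quote (or prove in place, if elementary) the compatibility between $\ell$'s splitting in $\OO_{d_u}$ and in $\OO_{d_x}$; (3) finish with the prime-by-prime identification of $R_{d_u}(p^{v_p(N)})\tilde\rho$-factors with $\varepsilon_{d_u}(p,N)$, noting that the hypothesis "$\ell$ unramified in at least one of $\OO_{d_u},\OO_{d_x}$" is exactly what is needed to push the argument through on whichever side carries the inert prime.
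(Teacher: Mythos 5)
Your plan correctly identifies the overall structure — reduce to a product of local factors and then match them with $\varepsilon_{d_u}(p,N)$ — and the prime-by-prime analysis in step (3) does mirror the proof of Theorem~\ref{T: localBY}. You also correctly sense that a relation linking $d_u$, $d_x$, and $N$ is needed and will govern the interaction at $\ell$. But there is a concrete gap in the vanishing cases, and it is not the one you flag.

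The cases you handle cleanly (by noting $R_{d_u}(\ell^{-1}N)=0$) are those where $\ell$ is inert in $\OO_{d_u}$ with $v_\ell(N)$ even. The genuinely nontrivial vanishing cases are when $\ell$ is \emph{split} in $\OO_{d_u}$ (or ramified in $\OO_{d_u}$ while split in $\OO_{d_x}$): then $R_{d_u}(\ell^{-1}N)$ has no reason to vanish, and your proposal offers no mechanism to force the left-hand side to zero. The paper's argument is a Hilbert-symbol product-formula argument, not a splitting compatibility from Proposition~\ref{prop:Splitting} (which is not used in this proof at all, and anyway comes later). The key input, quoted from \cite[Eqn. 3.6]{LV-Igusa}, is that $d_x d_u$ is a norm from $\Q_p(\sqrt{-N})$, hence $(d_u,-N)_p = (d_x,-N)_p$ for all $p$ including $\infty$. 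In the vanishing cases one shows $(d_u,-N)_\ell = 1$ (either because one of $d_u, d_x$ is a square mod $\ell$, or because the extension is unramified and $v_\ell(N)$ is even), while $(d_u,-N)_\infty = -1$ since $d_u<0$. The product formula then produces some prime $p\ne\ell$ with $(d_u,-N)_p=-1$, and \emph{that} prime kills the product: if $p$ ramifies in $\OO_{d_u}$, then $\tilde\rho_{d_u}(N)=0$ by its very definition; if $p$ is unramified, it must be inert with $v_p(N)$ odd, so $R_{d_u}(\ell^{-1}N)=0$. Without this global obstruction you cannot close the split-at-$\ell$ cases. Also note that your concern about non-maximal orders is moot here: Theorem~\ref{thm:LVFormula}'s hypotheses include that $d_u(n)$ is a fundamental discriminant, so $\OO_{d_u}$ is the maximal order of $\Q(\sqrt{d_u})$, and the multiplicativity you want is the usual one for maximal orders.
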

			\begin{proof}
				Recall that the Hilbert symbol $(a,b)_p$ remains unchanged when $a$ is multiplied by a norm from $\Q_p(\sqrt{b})$.  Thus since $d_xd_u = \Norm_{\Q_p(\sqrt{-N})}(t_xt_u - 2t_{xu^{\vee}} - 2\sqrt{-N})$~\cite[Eqn. 3.6]{LV-Igusa}, we have 
				\[
					\left(d_u, -N\right)_p = 
					\left(d_x, -N\right)_p
				\]
				for all primes $p$ (including $\infty$).

				Assume that $\ell$ is split in $\OO_{d_u}$ or $\OO_{d_x}$, or that $\ell$ is inert in $\OO_{d_u}$ or $\OO_{d_x}$ and $v_{\ell}(N)$ is even.  Then the right hand side of formula~\ref{eq:LVFormulaLocal} is zero.  If $\ell$ is split in $\OO_{d_u}$ or $\OO_{d_x}$, then the Hilbert symbol $\left(d_u, -N\right)_{\ell} = 1$ because either $d_u$ or $d_x$ is a square modulo $\ell$.  Recall that if $\Q_{\ell}(\sqrt{a})$ is a nontrivial unramified extension of $\Q_{\ell}$, then $(a,b)_{\ell} = 1$ if and only if $v_\ell(b)$ is even (\cite[Thm 1, p. 39]{Serre}).  Thus if $\ell$ is inert in $\OO_{d_u}$ or $\OO_{d_x}$ and $v_{\ell}(N)$ is even then $\left(d_u, -N\right)_{\ell} = 1$. By~\cite[Proof of Cor. 2.7]{LV-Igusa} $d_u$ is negative and so $\left(d_u, -N\right)_{\infty} = -1$. Therefore, by the product formula, there exists some prime $p\ne \ell$ such that $\left(d_u, -N\right)_{p} = -1$.  If $p$ is ramified in $\OO_{d_u}$, then this is exactly the condition to have $\tilde{\rho}_{d_u}(N) = 0$, so the left hand side of formula~\ref{eq:LVFormulaLocal} is also zero. If $p$ is unramified in $\OO_{d_u}$, then since $(d_u, -N)_p = -1$, $p$ must be inert in $\OO_{d_u}$ and $v_{p}(N)$ must be odd by the same argument as above.  In this case, there is no ideal in $\OO_{d_u}$ with norm $\ell^{-1}N$ and so $R_{d_u}(\ell^{-1}N) = 0$.

				Since, by assumption, $\ell$ does not ramify in both $\OO_{d_u}$ and $\OO_{d_x}$, the remaining case is when $\ell$ is inert in $\OO_{d_u}$ or $\OO_{d_x}$ and $v_{\ell}(N)$ is odd.  If $\ell$ is inert in $\OO_{d_x}$, then since $\ell$ divides $N = \frac14(d_xd_u - (d_xd_u - 2t)^2)$, $\ell$ is either inert or ramified in $\OO_{d_u}$.  In either case, $R_{d_u}(\ell^{2k}) = 1$ for any non-negative integer $k$, so
				\[
					R_{d_u}(\ell^{-1}N) 
					= \prod_{p | N}R_{d_u}(p^{v_p(\ell^{-1}N)})
					= \prod_{p | N, p\ne \ell}R_{d_u}(p^{v_p(N)}).
				\]
				Then, by the same argument as in the proof of Theorem~\ref{thm:BYLocal},
				\[
					R_{d_u}(\ell^{-1}N) =
						\prod_{p | N, p\ne \ell}
						\begin{cases}
							\frac12(1 + (-1)^{v_p(N)}) & 
								\textup{if }p\textup{ is inert in }\OO_{d_u},\\
							v_p(N) + 1 & 
								\textup{if }p\textup{ is split in }\OO_{d_u},\\
							1 & 
								\textup{if }p|d_u.
						\end{cases}
				\]
	Furthermore, it follows from the definition of $\tilde{\rho}_{d_u}$ that 			
				\[
					\tilde{\rho}_{d_u}(N) = \prod_{\substack{p | \textup{gcd}(N, d_u),\\ p\ne\ell}} 
						\begin{cases}
							2 & \textup{if }(d_u, -N)_p = 1,\\
							0 & \textup{if }(d_u, -N)_p = -1.
						\end{cases}
				\]
	From these two local expansions, it is clear that
				\[
					\mu(n)R_{d_u}(\ell^{-1}N)\tilde{\rho}_{d_u}(N) = 
					\mu(n)\prod_{p|N, p\ne \ell}\varepsilon_{d_u}(p, N)
				\]
				if $\ell$ is inert in $\OO_{d_u}$ or $\OO_{d_x}$ and $v_{\ell}(N)\equiv 1 \bmod 2$.  This completes the proof.
			\end{proof}
	
	\section{Relative integral bases}\label{sec:IntBasis}
	
		In the previous section (and hence throughout the paper), a number of quantities, such as $\alpha_0, \alpha_1, \beta_0, \beta_1$ and the others defined in terms of these, are expressed in a way that depends on the form of the integral basis $\{1, \eta\}$ for $\mathcal{O}_F$. In this section, we use a result of Spearman and Williams to determine the possible forms $\eta$ can take, thus narrowing down the possibilities for the other quantities given in Section 3.  Throughout, we let $A,B\in\frac12\Z$ be such that $A + B\sqrt{D}$ is squarefree in $\OO_F$ and such that $K= F(\sqrt{A + B\sqrt{D}}).$

		\begin{lemma}\label{lem:intbasis}
			Assume that $D$ and $\Dtilde$ are $1$ modulo $4$ and squarefree and that $\OO_K$ is freely generated over $\OO_F$.  Then a relative integral basis for $K$ over $F$ is $\{1, \eta\}$, where 
			\[
				\eta = \frac{1+\sqrt{A+B\sqrt{D}}}{2} 
				\quad\textup{ or }\quad
				\eta = \frac{2B + \sqrt{D} + 2\sqrt{A+B\sqrt{D}}}{4}.
			\]
			Furthermore, the latter case only occurs if $D \equiv 5\bmod 8$ and $A,B\in\frac12\Z\setminus\Z.$
		\end{lemma}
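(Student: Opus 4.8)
The plan is to determine the relative integral basis by analyzing when a candidate element $\eta = \frac{u + v\sqrt{D} + w\sqrt{A+B\sqrt{D}}}{\text{(denominator)}}$ lies in $\OO_K$ and, together with $1$, generates $\OO_K$ over $\OO_F$. The natural approach is to invoke the result of Spearman and Williams on integral bases of quartic fields (or of relative quadratic extensions), specialized to our situation where $K = F(\sqrt{A+B\sqrt{D}})$ with $F = \Q(\sqrt{D})$, $D$ squarefree and $\equiv 1 \bmod 4$, and $A + B\sqrt{D}$ squarefree in $\OO_F$. Since $\OO_F = \Z[\frac{D+\sqrt{D}}{2}]$, any $\OO_F$-generator $\eta$ of $\OO_K$ may, after translating by an element of $\OO_F$ and scaling, be taken of the form $\eta = \frac{r + \sqrt{A+B\sqrt{D}}}{m}$ with $r \in \OO_F$ and $m$ a positive integer dividing $2$ in the "easy" cases; the content of the lemma is that $m \in \{2, 4\}$ and $r$ takes one of the two displayed shapes. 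First I would write down the minimal polynomial of such an $\eta$ over $F$, namely $X^2 - \Tr_{K/F}(\eta) X + \Norm_{K/F}(\eta)$, and impose that both coefficients lie in $\OO_F$; this gives congruence conditions on $r$ and $m$ modulo powers of $2$ (the only relevant prime, since $A+B\sqrt{D}$ is squarefree and $D$ is odd).

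The key steps, in order: (1) Reduce to the local question at the prime $2$, using that $D$ odd and $A+B\sqrt{D}$ squarefree force $\OO_K = \OO_F[\sqrt{A+B\sqrt{D}}]$ away from $2$, so the index $[\OO_K : \OO_F[\sqrt{A+B\sqrt{D}}]]$ is a power of $2$. (2) Split into the cases $A, B \in \Z$ and $A, B \in \frac12\Z \setminus \Z$, since $A + B\sqrt{D} \in \OO_F$ forces these to be the only possibilities (as $\OO_F = \Z + \Z\frac{D+\sqrt{D}}{2}$ and $D \equiv 1 \bmod 4$). (3) In the case $A, B \in \Z$: show $\eta = \frac{1 + \sqrt{A+B\sqrt{D}}}{2}$ is integral precisely when $A + B\sqrt{D} \equiv 1 \bmod 4\OO_F$, i.e. $A \equiv 1, B \equiv 0 \bmod 4$ or similar, and that this $\eta$ (with $1$) generates $\OO_K$; otherwise $\{1, \sqrt{A+B\sqrt{D}}\}$ itself is the basis — but one must check that the hypothesis "$\Dtilde \equiv 1 \bmod 4$" together with "$\OO_K$ freely generated" forces the half-integral shape, ruling out $\eta = \sqrt{A+B\sqrt{D}}$ as the only option. (4) In the case $A, B \in \frac12\Z \setminus \Z$: here $\sqrt{D}$ appears genuinely in numerators, and one analyzes $\eta = \frac{2B + \sqrt{D} + 2\sqrt{A+B\sqrt{D}}}{4}$; compute its trace $B + \tfrac12\sqrt{D}\cdot(\text{something})$ — wait, more carefully $\Tr_{K/F}(\eta) = \frac{2B+\sqrt{D}}{2} = B + \frac{\sqrt D}{2}$, which lies in $\OO_F$ exactly when... and here the condition $D \equiv 5 \bmod 8$ should emerge from requiring the relevant $2$-adic congruences to be solvable, since $D \equiv 1 \bmod 8$ would instead force the first shape or no freely-generating $\eta$ of this form.

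The main obstacle I expect is step (4): correctly pinning down that the second shape $\eta = \frac{2B + \sqrt{D} + 2\sqrt{A+B\sqrt{D}}}{4}$ is integral \emph{and} generates $\OO_K$ over $\OO_F$ \emph{only} when $D \equiv 5 \bmod 8$ (and $A, B$ half-integral). This requires a careful $2$-adic computation: one must expand $\Norm_{K/F}(\eta) = \frac{(2B+\sqrt D)^2 - 4(A+B\sqrt D)}{16} = \frac{4B^2 + D + 4B\sqrt D - 4A - 4B\sqrt D}{16} = \frac{4B^2 + D - 4A}{16}$ and determine when this lies in $\OO_F$, i.e. when $16 \mid 4B^2 + D - 4A$ in the appropriate sense (noting $4B^2, 4A \in \Z$ since $B, A \in \frac12\Z$ means $2A, 2B \in \Z$ so $4A, 4B^2 \in \Z$). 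The divisibility $16 \mid D + 4B^2 - 4A$ combined with $D$ odd pins down $D \bmod 8$ once the residues of $A, B$ are accounted for; reconciling this with the squarefreeness of $A + B\sqrt D$ and the hypothesis $\Dtilde = \Norm_{F/\Q}(\Disc_{K/F}) \equiv 1 \bmod 4$ is the delicate bookkeeping. I would organize this by reducing everything to the structure of $\OO_F/4\OO_F$ and $\OO_F/8\OO_F$ as rings, using that $2$ is inert in $F$ when $D \equiv 5 \bmod 8$ and split when $D \equiv 1 \bmod 8$ — this dichotomy is, I believe, exactly what forces the case distinction in the final sentence of the lemma.
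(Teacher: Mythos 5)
Your proposal gestures at the Spearman--Williams result in the opening sentence but then abandons it in favor of a from-scratch $2$-adic derivation; the paper's actual proof does the opposite. The paper proves the lemma entirely by citation: Spearman--Williams~\cite{SW-RelativeIntegralBases} classify (building on Huard--Spearman--Williams~\cite{HSW-IntegralBases}) the $51$ cases of a quartic field over a quadratic subfield, determine in which of them a relative integral basis exists, and write down $\eta$ explicitly for each. The paper's argument is simply to observe that the hypotheses $D \equiv 1 \bmod 4$, $\Dtilde$ squarefree, and $\OO_K$ free over $\OO_F$ restrict to five of their cases (C2, C7, D3, D16, D20) and to read off $\eta$ from their tables: the case breakdown $D \equiv 1 \bmod 8$ vs.\ $D \equiv 5 \bmod 8$ corresponds to their D-cases vs.\ C-cases, and within the C-cases the two shapes of $\eta$ correspond to $A,B\in\Z$ vs.\ $A,B\in\frac12\Z\setminus\Z$. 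There is no local computation at all.

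Your alternative plan is conceptually sound and your norm computation $\N_{K/F}(\eta)=\frac{4B^2+D-4A}{16}$ for the second shape matches $\beta_0$ in Proposition~\ref{prop:SWquantities}, but the sketch has real gaps as written. You flag step (4) as ``the main obstacle I expect'' and never carry out the $\OO_F/8\OO_F$ computation that is supposed to extract $D\equiv 5\bmod 8$. More to the point, the case $D\equiv 1 \bmod 8$ with $A,B\in\frac12\Z\setminus\Z$ is not actually disposed of. Writing $\omega=\frac{1+\sqrt D}{2}$ so that $A+B\sqrt D=(A-B)+2B\,\omega$, the first shape $\eta=\frac{1+\sqrt{A+B\sqrt D}}{2}$ is integral over $\OO_F$ iff $4\mid(1-A+B)$ and $4\mid 2B$; when $A,B$ are half-integers $2B$ is odd, so the first shape \emph{never} works in that regime. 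Thus your plan needs to show that the hypotheses (squarefreeness of $\Dtilde$, freeness of $\OO_K$) rule out the combination $D\equiv 1\bmod 8$ and $A,B$ half-integral, or rule out a freely-generating $\eta$ there altogether; you acknowledge this vaguely (``would instead force the first shape or no freely-generating $\eta$ of this form'') but don't prove it. There is also an implicit dependence on the choice of representative $A+B\sqrt D$ modulo squares of units of $\OO_F$, which Spearman--Williams's normalization handles for free and which a direct $2$-adic argument must manage explicitly. These are exactly the bookkeeping issues their classification disposes of, which is why the paper simply cites it.
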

		\begin{proof}
	 		In~\cite{SW-RelativeIntegralBases}, Spearman and Williams give a necessary and sufficient condition for the existence of a relative integral basis for a quartic number field over a quadratic subfield.  In addition, in the cases where a relative integral basis exists, they give an explicit description of such an integral basis.  This lemma will follow almost immediately from their work.  

			Spearman and Williams use the classification of quartic number fields with a quadratic subfield given in an earlier paper of Huard, Spearman, and Williams~\cite{HSW-IntegralBases}; there are 51 cases with labels A1-A8, B1-B8, C1-C8, and D1-D27.  If $D\equiv 5 \bmod8$, then the field falls in cases C1 - C8 and if $D\equiv 1 \bmod 8$ then the field falls in cases D1-D27.  

			By~\cite[Thm 1]{SW-RelativeIntegralBases}, if $\Dtilde$ is squarefree, then $K$ falls into one of nine cases, only five of which have the property that $D\equiv 1 \bmod 4$.  These five cases are C2, C7, D3, D16, or D20.  If $D\equiv 1 \bmod 8$, then by~\cite[Thm 2]{SW-RelativeIntegralBases}, $\eta = \frac{1 + \sqrt{A + B\sqrt{D}}}{2}$.

			Now consider the case when $D\equiv 5 \bmod 8$.  By~\cite[Thm 2 and p.190]{SW-RelativeIntegralBases}, if $A,B\in\Z$, then $\eta = \frac{1 + \sqrt{A + B\sqrt{D}}}{2}$ and otherwise $A,B\in\frac12\Z\setminus\Z$ and $\eta = \frac{2B + \sqrt{D} + 	2\sqrt{A+B\sqrt{D}}}{4}$.
		\end{proof}

		We will use this lemma to give simplified expressions for 
	the quantities $d_u(n), d_x(n), t_{xu^{\vee}}(n)$ defined in~\S\ref{sec:LV}.  
		\begin{prop}\label{prop:SWquantities}
			Assume that $D$ and $\Dtilde$ are $1$ modulo $4$ and squarefree and that $\OO_K$ is freely generated over $\OO_F$.  Let $\delta\in\Z_{>0}$ be such that $D - 4\delta$ is a square and let $n\in\Z$ be such that $2D$ divides $(n + c_K\delta)$ and such that $\frac{\delta^2\Dtilde - n^2}{4D}$ is a positive integer.  Then
			\[
				\begin{array}{c}
				d_u(n) = \frac{\delta (2n+\delta 2A)}{D}, \;
				d_x(n) = A-B\sqrt{D-4\delta}-\frac{2n+\delta 2A}{D}, \;
				t_xt_u - 2t_{xu^\vee}(n) = B\delta - \frac{\sqrt{D-4\delta}(n+\delta A)}{D}.
				\end{array}
			\]
			Moreover, $c_K \equiv 1\bmod2$ and $2c_K \equiv 2A \bmod D$. 
		\end{prop}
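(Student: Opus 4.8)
The plan is to combine Lemma~\ref{lem:intbasis} with a direct computation of the minimal polynomial of $\eta$ over $F$. Lemma~\ref{lem:intbasis} gives two possibilities for $\eta$; in each one I would compute $\Tr_{K/F}(\eta)$ and $\Norm_{K/F}(\eta)$, expand them in the $\Z$-basis $\{1,\tfrac{D+\sqrt D}{2}\}$ of $\OO_F$ to read off $\alpha_0,\alpha_1,\beta_0,\beta_1$, and then substitute into the definition of $c_K$ and into the formulas for $t_u,t_x,d_u(n),d_x(n),t_{xu^\vee}(n)$ from \S\ref{sec:LV}.

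Write $\theta := \sqrt{A+B\sqrt D}$, so $\theta^2 = A+B\sqrt D$. If $\eta = \tfrac{1+\theta}{2}$, then squaring $2\eta-1=\theta$ gives $\eta^2 - \eta + \tfrac{1-A-B\sqrt D}{4}=0$, so $\Tr_{K/F}(\eta)=1$ and $\Norm_{K/F}(\eta)=\tfrac{1-A-B\sqrt D}{4}$; matching rational and $\sqrt D$ parts yields $\alpha_0=1$, $\alpha_1=0$, $\beta_1=-B/2$, $\beta_0=\tfrac{1-A+BD}{4}$. Because $\OO_K=\OO_F[\eta]$ these are integers, which in particular forces $A\in\Z$. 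Substituting into the definition of $c_K$, the right-hand side collapses to $c_K = A$. If instead $\eta = \tfrac{2B+\sqrt D+2\theta}{4}$ --- which by Lemma~\ref{lem:intbasis} occurs only when $D\equiv 5\bmod 8$ and $A,B\in\tfrac12\Z\setminus\Z$ --- then squaring $4\eta - 2B-\sqrt D = 2\theta$ gives $\eta^2 - \tfrac{2B+\sqrt D}{2}\eta + \tfrac{4B^2+D-4A}{16}=0$, so $\alpha_1=1$, $\alpha_0 = B-\tfrac D2$, $\beta_1=0$, $\beta_0=\tfrac{4B^2+D-4A}{16}$, and substituting into the definition of $c_K$ and collecting terms gives $c_K = A-\tfrac D2$.

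With $\alpha_0,\alpha_1,\beta_0,\beta_1$ and $c_K$ now explicit, the three displayed identities follow by plugging these values into the definitions of $t_u,t_x,d_u(n),d_x(n),t_{xu^\vee}(n)$. When $\eta=\tfrac{1+\theta}{2}$ this is essentially immediate because $\alpha_1=0$ kills most of the terms. When $\eta=\tfrac{2B+\sqrt D+2\theta}{4}$ one must check that the $(\alpha_1\delta)^2=\delta^2$ term inside $d_u(n)$ cancels against the $\tfrac D2\delta$ contribution coming from $c_K=A-\tfrac D2$, and similarly that the $\tfrac D2\delta$ and $\sqrt{D-4\delta}$ contributions combine as required inside $d_x(n)$ and inside $t_xt_u-2t_{xu^\vee}(n)$; after these cancellations the stated closed forms drop out. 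For the remaining two assertions: $2c_K=2A$ in the first case and $2c_K=2A-D$ in the second, so $2c_K\equiv 2A\bmod D$ in both; and reducing the defining expression for $c_K$ modulo $2$ --- using that $D$ is odd, that $m^2\equiv m\bmod 2$ for integers $m$, and that $\tfrac{D^2-D}{4}\equiv\tfrac{D-1}{4}\bmod 2$ --- gives $c_K\equiv\alpha_0(1+\alpha_1)+\alpha_1\tfrac{D-1}{4}\bmod 2$, which equals $1$ in the first case (where $\alpha_0=1$, $\alpha_1=0$) and equals $\tfrac{D-1}{4}\equiv 1\bmod 2$ in the second (where $\alpha_1=1$ and $D\equiv 5\bmod 8$).

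The only real work is the bookkeeping in the last paragraph for the second shape of $\eta$ --- tracking the mutual cancellation of the $\delta^2$, $\tfrac D2\delta$, and $\sqrt{D-4\delta}$ terms --- and this is purely mechanical, so I expect no genuine obstacle. The hypotheses of the proposition enter only through Lemma~\ref{lem:intbasis}, which is what pins $\eta$ down to the two shapes above and guarantees that $\alpha_0,\alpha_1,\beta_0,\beta_1$, and hence $c_K$, are integers.
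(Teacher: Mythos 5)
Your proposal is correct and follows essentially the same route as the paper's proof: invoke Lemma~\ref{lem:intbasis} to pin down $\eta$ to the same two shapes, read off the same explicit values of $\alpha_0,\alpha_1,\beta_0,\beta_1$ (the paper states them without deriving the minimal polynomials, but the values coincide), compute $c_K = A$ or $A - D/2$, substitute to obtain the displayed identities, and deduce the two congruences. The mod-2 bookkeeping you do with $m^2\equiv m$ and $\frac{D^2-D}{4}\equiv\frac{D-1}{4}$ is a cosmetic reorganization of the paper's observation that $c_K\equiv\alpha_0^2+\alpha_0\alpha_1+\alpha_1^2\pmod 2$, not a different argument.
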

		\begin{proof}

	Lemma~\ref{lem:intbasis} gives us two possible choices for $\eta$. In each case, we can explicitly give the values of $\alpha_i, \beta_i$. If $\eta = \frac{1+\sqrt{A+B\sqrt{D}}}{2}$, we have $\alpha_0 = 1, \alpha_1 = 0, \beta_0 = \frac{1-A+BD}{4}$, and $\beta_1 = -\frac{B}{2}$. If $\eta =\frac{2B + \sqrt{D} + 2\sqrt{A+B\sqrt{D}}}{4}$, we have $\alpha_0 =B- \frac{D}{2}, \alpha_1 = 1, \beta_0 = \frac{4B^2+D-4A}{16}$, and $\beta_1 = 0$. Using these values, we calculate $d_u(n)$, $d_x(n)$, and $t_xt_u - 2t_{xu^\vee}(n)$ and find that they have the desired expressions in both cases.

	To prove the congruence conditions, recall that 
			\[
				c_K =	\alpha_0^2 + \alpha_0\alpha_1D + \alpha_1^2\frac{D^2 - D}{4} - 4\beta_0 - 2\beta_1D.
			\]
	If $\alpha_0=1$ and $\alpha_1=0$, then $c_K \equiv 1 \bmod 2$. Otherwise, $D \equiv 5\pmod 8$, and so $\frac14(D^2 - D) \equiv 1\bmod 2$. Then, $c_K \equiv (\alpha_0^2 + \alpha_0\alpha_1 + \alpha_1^2) \bmod 2$. Since $\alpha_1 = 1$ in this case, we see that, regardless of the parity of $\alpha_0$, $c_K \equiv 1 \bmod 2$. 

	Calculating $c_K$ explicitly in each case, we see that $c_K$ is either equal to $A$ or $A-\frac{D}{2}$. In either case, it is clear that $2c_K \equiv 2A \bmod D$.\end{proof}
	
	\section{Equality of indices}\label{sec:EqualityIndices}

		The remainder of the paper will focus on proving, under slightly weaker assumptions than those in Theorems~\ref{thm:BYFormula} and~\ref{thm:LVFormula} and \emph{without} using Theorems~\ref{thm:LVFormula} and~\ref{thm:BYFormula}, that the expressions~\eqref{eq:BYFormula} and~\eqref{eq:LVFormula} agree. Precisely, we will show:
		\begin{theorem}\label{thm:Main}
			Assume that:
			\begin{itemize}
				\item $D$ is prime, and hence congruent to $1$ modulo $4$,
				\item $\Dtilde$ is squarefree and congruent to $1$ modulo $4$,
				\item $\Norm(\frakD_{\Ktilde/\Ftilde}) = D$, and
				\item for all $\delta\in\Z_{>0}$ such that $D - 4\delta$ is a square, and for all $n\in\Z$ such that $2D|n + c_K\delta$ and $4D|\delta^2\Dtilde - n^2$, $d_u(n)$ is a \emph{fundamental} discriminant, i.e., $d_u(n)$ is the discriminant of a quadratic field.
			\end{itemize}
			Then~\eqref{eq:BYFormula} and~\eqref{eq:LVFormula} are equal.
		\end{theorem}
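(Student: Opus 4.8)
Fix a rational prime $\ell$; the assertion is an identity between the two explicit expressions~\eqref{eq:BYFormula} and~\eqref{eq:LVFormula} for this $\ell$, and the plan is to prove it \emph{term by term}. First I would match the indexing sets of the two double sums; then, for each matched pair, I would show that the BY summand $B_{\frakN}(\ell)$ equals the LV summand $\mu(n)\tilde\rho_{d_u(n)}(N)R(N\ell^{-1})$ by expanding both as products of local factors indexed by the rational primes $p\mid N$ with $p\ne\ell$. Those two expansions are exactly the content of Theorems~\ref{thm:BYLocal} and~\ref{thm:LVLocal}, so the actual work is to verify that those theorems apply, to match their vanishing loci, and to match their nonzero values factor by factor.

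\emph{Equality of indices.} The outer sums are immediately the same: in~\eqref{eq:BYFormula} the condition $\delta=(D-x^2)/4$ with $x\in\Z$ says precisely that $D-4\delta$ is a perfect square, which is the condition in~\eqref{eq:LVFormula}. Fix such a $\delta$. I would rewrite the BY condition $\tfrac{n+\delta\sqrt{\Dtilde}}{2D}\in\frakD_{\Ktilde/\Ftilde}^{-1}$ in elementary terms, using $\Norm(\frakD_{\Ktilde/\Ftilde})=D$ (so that $D$ is split or ramified in $\Ftilde=\Q(\sqrt{\Dtilde})$, never inert) together with the explicit generator of $\frakD_{\Ktilde/\Ftilde}$ supplied by the integral-basis analysis of Section~\ref{sec:IntBasis}; the upshot is the congruence $2D\mid n+c_K\delta$, where the constant is pinned down by $c_K\equiv 1\bmod 2$ and $2c_K\equiv 2A\bmod D$ from Proposition~\ref{prop:SWquantities}. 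The archimedean bound $|n|<\delta\sqrt{\Dtilde}$ is $\delta^2\Dtilde-n^2>0$, and integrality of $\frakN\frakD_{\Ktilde/\Ftilde}$ together with $\Dtilde\equiv 1\bmod 4$ gives $4D\mid\delta^2\Dtilde-n^2$; conversely these conditions imply $\tfrac{n+\delta\sqrt{\Dtilde}}{2D}\in\frakD_{\Ktilde/\Ftilde}^{-1}$. Hence the inner sums run over the same set of $n$, with $N=\Norm(\frakN\frakD_{\Ktilde/\Ftilde})=\tfrac{\delta^2\Dtilde-n^2}{4D}$ throughout; this is Proposition~\ref{prop:EquivCondOnn}.

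\emph{Equality of summands.} Fix a matched triple $(\delta,n,N)$ and set $d_u=d_u(n)$, $d_x=d_x(n)$. First one checks that the hypotheses of Theorems~\ref{thm:BYLocal} and~\ref{thm:LVLocal} hold for the parameters that actually contribute: on the BY side, that at most one prime $\frakl\mid\ell$ of $\OO_{\Ftilde}$ has $v_{\frakl}(\frakN\frakD_{\Ktilde/\Ftilde})>0$ and that it is unramified in $\Ktilde$; on the LV side, that $\ell$ does not ramify in both $\OO_{d_u}$ and $\OO_{d_x}$. Here the hypothesis that $d_u$ is a fundamental discriminant is essential: it makes $\OO_{d_u}$ a maximal order and, via $d_u=\tfrac{2\delta(n+\delta A)}{D}$, forces $\ell\nmid\gcd(\delta,N)$, which in turn forces $\ell$ to be split in $\Ftilde$ whenever $\ell\mid N$, hence $f(\frakl/\ell)=1$; the remaining cases contribute $0$ on both sides. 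Granting this, Theorems~\ref{thm:BYLocal} and~\ref{thm:LVLocal} reduce $B_{\frakN}(\ell)=\mu(n)\tilde\rho_{d_u}(N)R(N\ell^{-1})$ to: (a) the equivalence ``$\frakl$ inert in $\Ktilde/\Ftilde$ and $v_{\frakl}(\frakN)$ odd'' $\iff$ ``$\ell$ inert in $\OO_{d_u}$ or $\OO_{d_x}$ and $v_\ell(N)$ odd'' (both summands $0$ otherwise); and (b) in the nonzero case, $\tfrac12(v_{\frakl}(\frakN)+1)f(\frakl/\ell)=\mu(n)$ and $\varepsilon_{\Ktilde/\Ftilde}(p,\frakN\frakD_{\Ktilde/\Ftilde})=\varepsilon_{d_u}(p,N)$ for every $p\mid N$, $p\ne\ell$. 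The scalar equality follows from $v_\ell(N)=\sum_{\frakl\mid\ell}f(\frakl/\ell)\,v_{\frakl}(\frakN\frakD_{\Ktilde/\Ftilde})$ once one isolates the exceptional prime $\ell=D$, and the $\varepsilon$-equality is a prime-by-prime match of the two piecewise definitions in which the clause $(d_u,-N)_p=1$ defining $\varepsilon_{d_u}$ is handled via the identity $(d_u,-N)_p=(d_x,-N)_p$ from the proof of Theorem~\ref{thm:LVLocal}.

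\emph{The main obstacle.} Both the equivalence of vanishing conditions and the prime-by-prime $\varepsilon$-match come down to a single ``reciprocity'' statement — Proposition~\ref{prop:Splitting} — relating the splitting type of a prime in the quadratic extension $\Ktilde/\Ftilde$ to its splitting type in $\Q(\sqrt{d_u})$ and $\Q(\sqrt{d_x})$, and this is where the difficulty lies. To prove it I would exploit the explicit algebraic relations among $d_u$, $d_x$, $\Dtilde$, $D$, $\delta$, $n$ from Sections~\ref{sec:LV} and~\ref{sec:IntBasis} — in particular $d_ud_x=\Norm_{\Q_p(\sqrt{-N})}(t_xt_u-2t_{xu^{\vee}}-2\sqrt{-N})$ and the resulting Hilbert-symbol identity, together with $t_xt_u-2t_{xu^{\vee}}(n)=B\delta-\tfrac{\sqrt{D-4\delta}(n+\delta A)}{D}$ — and then run a case analysis according to whether $D$ is ramified or split in $\Ftilde$ and whether $p=2$, $p\mid D$, or $p\mid\delta$, computing the relevant Hilbert symbols at each $p$ and checking that the genus-theory factors of $2$ carried by $\tilde\rho_{d_u}$ (hence by $\varepsilon_{d_u}$) match the ideal-counting factors hidden in $\varepsilon_{\Ktilde/\Ftilde}$. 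I expect the prime $p=2$ — where the shape of $\eta$ from Lemma~\ref{lem:intbasis} enters and Hilbert symbols are most delicate — and the bookkeeping at $p\mid D$ in the split case to be the most troublesome points.
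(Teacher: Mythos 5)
Your proposal follows essentially the same route as the paper: match the indexing sets via Proposition~\ref{prop:EquivCondOnn}, localize both summands through Theorems~\ref{thm:BYLocal} and~\ref{thm:LVLocal}, reduce to the scalar identity $\tfrac12(v_{\frakl}(\frakN)+1)f(\frakl/\ell)=\mu(n)$ together with the prime-by-prime match $\varepsilon_{\Ktilde/\Ftilde}=\varepsilon_{d_u}$, and pivot everything on the splitting reciprocity of Proposition~\ref{prop:Splitting}, proved by Hilbert-symbol computations using the norm identity for $d_ud_x$. You also correctly identify the role of the fundamental-discriminant hypothesis in forcing $\gcd(\delta,N)=1$ and hence $f(\frakl/\ell)=1$ (the only slip is the phrase ``split in $\Ftilde$'' where ``ramified'' is also possible, but $f=1$ holds either way), so this matches the paper's argument in structure and in all substantive steps.
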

		\begin{cor}\label{cor:BYconj}
			Retain the assumptions of Theorem~\ref{thm:Main}.  Then, the Bruinier-Yang conjectural formula for $(\CM(K).\textup{G}_1)_{\ell}$ holds.
		\end{cor}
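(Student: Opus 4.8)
The plan is to obtain Corollary~\ref{cor:BYconj} as a formal consequence of Theorem~\ref{thm:Main} together with the fact that the Lauter--Viray formula~\eqref{eq:LVFormula} is \emph{already known}, by~\cite[Thm.~2.10]{LV-Igusa} (reproduced as Theorem~\ref{thm:LVFormula}), to compute $(\CM(K).\textup{G}_1)_\ell/(\log\ell)$. The essential point is that Theorem~\ref{thm:Main} establishes the equality of~\eqref{eq:BYFormula} and~\eqref{eq:LVFormula} \emph{without} using that either expression computes the arithmetic intersection number; hence, as soon as one of the two expressions is known to equal $(\CM(K).\textup{G}_1)_\ell/(\log\ell)$, so does the other, and that is what we exploit for~\eqref{eq:BYFormula}.

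The one step requiring an argument is to check that, under the hypotheses of Theorem~\ref{thm:Main}, the hypotheses of~\cite[Thm.~2.10]{LV-Igusa} are met for every rational prime $\ell$. That theorem demands, for each pair $(\delta,n)$ with $D-4\delta$ a square, $2D\mid n+c_K\delta$, and $N=\tfrac{\delta^2\Dtilde-n^2}{4D}\in\Z_{>0}$, both that $d_u(n)$ be a fundamental discriminant and that $\ell\nmid\gcd(\delta,N)$. The first condition is verbatim the fourth hypothesis of Theorem~\ref{thm:Main}. For the second, I would argue as follows: since $D$ is prime and $0<\delta\le D/4$, any prime $\ell$ dividing $\delta$ satisfies $\ell\nmid D$, so the relation $4DN=\delta^2\Dtilde-n^2$ forces $\ell\mid n$ whenever $\ell\mid\gcd(\delta,N)$; feeding $\ell\mid\delta$ and $\ell\mid n$ into the formula $d_u(n)=\tfrac{\delta(2n+2\delta A)}{D}$ from Proposition~\ref{prop:SWquantities} then yields $\ell^2\mid d_u(n)$ for odd $\ell$, contradicting that $d_u(n)$ is fundamental. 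The residual case $\ell=2$ is settled separately by a short computation using $D\equiv1\bmod4$, the oddness of $c_K$, and the two possible shapes of $\eta$ from Lemma~\ref{lem:intbasis}.

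Granting this, we may invoke~\cite[Thm.~2.10]{LV-Igusa} to conclude that $(\CM(K).\textup{G}_1)_\ell/(\log\ell)$ equals the expression~\eqref{eq:LVFormula}, while Theorem~\ref{thm:Main} asserts that the two displayed expressions~\eqref{eq:BYFormula} and~\eqref{eq:LVFormula} are equal; chaining these gives that the Bruinier--Yang expression~\eqref{eq:BYFormula} equals $(\CM(K).\textup{G}_1)_\ell/(\log\ell)$ for every $\ell$, which is precisely the statement that the Bruinier--Yang conjectural formula holds --- now for the class of fields allowed by Theorem~\ref{thm:Main}, in particular with $\Dtilde$ merely squarefree rather than prime. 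I expect the only real obstacle to be the verification of the condition $\ell\nmid\gcd(\delta,N)$, and within that the prime $\ell=2$; everything else is a formal concatenation of results already in hand, so the proof itself should be short.
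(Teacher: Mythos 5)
Your proposal matches the paper's proof exactly: combine the proved Lauter--Viray formula (Theorem~\ref{thm:LVFormula}) with the equality of expressions (Theorem~\ref{thm:Main}), after verifying that the hypotheses of the former follow from those of the latter. That verification is precisely Lemma~\ref{lem:pDoesntDivideDelta}, which the paper simply cites; your odd-$\ell$ argument reproduces its proof, and the $\ell=2$ case you defer is handled there as well (the relevant inputs are $D\equiv1\bmod8$, forced by $D-4\delta$ being a square with $2\mid\delta$, together with $\Dtilde\equiv1\bmod4$ and the formula for $d_u(n)$).
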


		Both formula~\eqref{eq:BYFormula} and~\eqref{eq:LVFormula} involve summands indexed by two integers denoted $\delta$ and $n$.  The index $\delta$ ranges over the same quantities in both~\eqref{eq:BYFormula} and~\eqref{eq:LVFormula}.  While it is not obvious, the same statement is true for the index $n$. 

		\begin{prop}\label{prop:EquivCondOnn}
			Assume that $D$ and $\Dtilde$ are congruent to $1$ modulo $4$ and squarefree and that $\Norm(\frakD_{\Ktilde/\Ftilde}) = D$. Fix a positive integer $\delta$ such that $D - 4\delta$ is a square. Then for any $n\in\Z$,
			\[
				\delta^2\Dtilde - n^2 \in 4 D\Z \textup{ and  }
				n \equiv -\delta c_K \pmod{2D}
			\]
			if and only if
			\[
				\frac{n + \delta\sqrt{\Dtilde}}{2D}\in 
				\frakD_{\Ktilde/\Ftilde}^{-1}.
			\]
		\end{prop}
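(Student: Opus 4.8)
The plan is to translate both conditions into statements about membership in the inverse different $\frakD_{\Ktilde/\Ftilde}^{-1}$ by working with an explicit relative integral basis for $\calO_{\Ktilde}$ over $\calO_{\Ftilde}$, and then to match up the resulting congruences using the formulas for $c_K$ and for the generator $\eta$ supplied by Lemma~\ref{lem:intbasis} and Proposition~\ref{prop:SWquantities}. Since $\Ktilde = \Ftilde(\sqrt{2A + 2\sqrt{A^2-B^2D}})$ and $\Dtilde = \Norm_{F/\Q}(\frakD_{K/F})$ (with $\Dtilde = A^2 - B^2D$ in the relevant cases), the element $\sqrt{\Dtilde}$ plays the role in $\Ftilde$ analogous to $\sqrt{D}$ in $F$. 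First I would record that $\Ftilde = \Q(\sqrt{\Dtilde})$ with $\Dtilde \equiv 1 \bmod 4$ squarefree has ring of integers $\Z\left[\frac{1 + \sqrt{\Dtilde}}{2}\right]$ and absolute different $(\sqrt{\Dtilde})$, so that $\frakD_{\Ftilde/\Q}^{-1} = \frac{1}{\sqrt{\Dtilde}}\calO_{\Ftilde}$; combined with the tower formula $\frakD_{\Ktilde/\Q} = \frakD_{\Ktilde/\Ftilde}\cdot\frakD_{\Ftilde/\Q}\calO_{\Ktilde}$ this lets one pass between the relative inverse different and explicit fractional ideals.

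**Main computation.** The key is to exhibit a concrete $\calO_{\Ftilde}$-basis $\{1, \theta\}$ of $\calO_{\Ktilde}$ (obtained from Lemma~\ref{lem:intbasis} applied to the reflex field, whose real quadratic subfield is $\Ftilde$ and whose relevant discriminant data mirrors that of $K$), compute $\frakD_{\Ktilde/\Ftilde} = (g'(\theta))$ where $g$ is the minimal polynomial of $\theta$ over $\Ftilde$, and then write down $\frakD_{\Ktilde/\Ftilde}^{-1}$ explicitly as $\frac{1}{g'(\theta)}\calO_{\Ktilde}$. Using $\Norm(\frakD_{\Ktilde/\Ftilde}) = D$ one sees $\frakD_{\Ktilde/\Ftilde}$ lies over the prime $D$ (this is the content of Remark~\ref{R: BruinierYangNorm}(2), reproven in the hypotheses). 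An element $\frac{n + \delta\sqrt{\Dtilde}}{2D}$ of $\Ftilde$ lies in $\frakD_{\Ktilde/\Ftilde}^{-1}$ if and only if its product with every element of $\frakD_{\Ktilde/\Ftilde}$ is integral in $\Ktilde$, equivalently (since $\frakD_{\Ktilde/\Ftilde}^{-1}$ is an $\calO_{\Ftilde}$-ideal and we can test at each prime) if and only if $v_{\frakl}\!\left(\frac{n+\delta\sqrt{\Dtilde}}{2D}\right) \ge -v_{\frakl}(\frakD_{\Ktilde/\Ftilde})$ for all primes $\frakl$ of $\calO_{\Ftilde}$. The only prime where $\frakD_{\Ktilde/\Ftilde}$ contributes is the one over $D$; away from $D$ one just needs $\frac{n + \delta\sqrt{\Dtilde}}{2D}$ to be $\frakl$-integral, which (after clearing the $2$, handled separately via the parity statement $c_K \equiv 1 \bmod 2$ from Proposition~\ref{prop:SWquantities} together with $\Dtilde \equiv 1 \bmod 4$) amounts to $\frac{\delta^2\Dtilde - n^2}{4D} \in \Z$, i.e. the divisibility $4D \mid \delta^2\Dtilde - n^2$. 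At the prime over $D$, the extra slack of $v_{\frakl}(\frakD_{\Ktilde/\Ftilde}) = 1$ is exactly what converts a bare integrality requirement into the sharper congruence $n \equiv -\delta c_K \bmod 2D$, via the computed shape of $\theta$ and the identity $2c_K \equiv 2A \bmod D$.

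**Execution of the matching.** Concretely, I would write $D = \frakl_D$ or $D = \frakl_D^2$ (it ramifies in $\Ftilde$ iff $D \mid \Dtilde$, which since both are prime means $D = \Dtilde$ — a degenerate case to dispatch separately, or excluded since $K$ primitive forces $D \ne \Dtilde$) — in the generic case $D$ splits or is inert in $\Ftilde$; combining with $\Norm(\frakD_{\Ktilde/\Ftilde}) = D$ forces $D$ to split in $\Ftilde$ as $\frakl_D\bar\frakl_D$ with $\frakD_{\Ktilde/\Ftilde} = \frakl_D$ (one of the two primes). Then I reduce $\frac{n + \delta\sqrt{\Dtilde}}{2}$ modulo $\frakl_D$ inside $\calO_{\Ftilde}/\frakl_D \cong \F_D$: writing $\sqrt{\Dtilde} \equiv s \bmod \frakl_D$ for the appropriate square root $s$, the condition $\frac{n + \delta\sqrt{\Dtilde}}{2D} \in \frakl_D^{-1}$ becomes $n + \delta s \equiv 0 \bmod D$, while at $\bar\frakl_D$ one gets $n - \delta s \equiv 0 \bmod D$ only under the stricter reading; the asymmetry is resolved because $\frac{\delta^2\Dtilde - n^2}{4D} \in \Z$ already gives $D \mid n^2 - \delta^2\Dtilde = (n - \delta s)(n + \delta s)$, so $D$ divides \emph{one} of the two factors, and the inverse-different condition picks out precisely the factor $\frakl_D$. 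Matching $\delta s \equiv -n \bmod D$ against $n \equiv -\delta c_K \bmod 2D$ is then done by showing $s \equiv$ (a fixed unit times) the relevant quantity mod $D$ and invoking $2c_K \equiv 2A \bmod D$ plus the parity normalization mod $2$ — this is the bookkeeping heart of the argument. I expect the main obstacle to be the careful treatment of the prime $2$: one must verify that $\frac{n + \delta\sqrt{\Dtilde}}{2D}$ is $2$-integral in $\Ftilde$ exactly when the numerator $n + \delta\sqrt{\Dtilde}$ is divisible by $2$ in $\calO_{\Ftilde}$, which uses $\Dtilde \equiv 1 \bmod 4$ and the congruence $c_K \equiv 1 \bmod 2$ (so that $n \equiv \delta \bmod 2$, matching the requirement that $n + \delta\sqrt{\Dtilde}$ lie in $2\calO_{\Ftilde} = (2, 1+\sqrt{\Dtilde})\cdots$); getting the $2$-adic and $D$-adic conditions to combine cleanly into the single congruence mod $2D$ by CRT is where the hypotheses ``$D$ prime'' and ``$\Dtilde$ squarefree'' are really used.
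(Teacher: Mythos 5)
Your overall strategy — factor $(p)$ in $\calO_{\Ftilde}$ for $p \mid D$, identify which prime divides $\frakD_{\Ktilde/\Ftilde}$, reduce $\frac{n+\delta\sqrt{\Dtilde}}{2D}\in\frakD_{\Ktilde/\Ftilde}^{-1}$ to one congruence at the prime over $D$ and one parity condition at $2$, and match these to $n\equiv -\delta c_K\pmod{2D}$ via the identities $2c_K\equiv 2A\pmod D$ and $c_K\equiv 1\pmod 2$ from Proposition~\ref{prop:SWquantities}, plus the norm computation for $4D\mid\delta^2\Dtilde - n^2$ — is exactly the paper's. But there are two issues in how you propose to execute it.

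First, a genuine gap: your plan rests on producing an explicit relative integral basis $\{1,\theta\}$ for $\calO_{\Ktilde}$ over $\calO_{\Ftilde}$ and computing $\frakD_{\Ktilde/\Ftilde}=(g'(\theta))$. Nothing in the proposition's hypotheses guarantees that $\calO_{\Ktilde}$ is a free $\calO_{\Ftilde}$-module — the freeness assumption in the paper applies only to $\calO_K/\calO_F$. Lemma~\ref{lem:intbasis} is not stated (and is not automatically applicable) for the reflex field, and your phrase ``whose relevant discriminant data mirrors that of $K$'' is not a substitute for checking this. The paper circumvents the entire integral-basis computation by observing directly that since $\Ktilde=\Ftilde(\sqrt{2A+2\sqrt{\Dtilde}})$, the prime $\frakp_2=(2A+2\sqrt{\Dtilde},\,p)$ over $p\mid D$ must be the one ramifying in $\Ktilde/\Ftilde$, hence dividing $\frakD_{\Ktilde/\Ftilde}$. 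This single observation pins down the sign (which prime $\frakl_D$ you should get), whereas your ``the inverse-different condition picks out precisely the factor $\frakl_D$'' is, as written, circular: that identification is precisely what must be proved, not invoked.

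Second, a smaller imprecision: you treat $D$ and $\Dtilde$ as primes and dismiss the ramified case via $D=\Dtilde$. But the proposition only assumes $D$ and $\Dtilde$ squarefree, so a prime $p$ with $p\mid D$ and $p\mid\Dtilde$ can occur nondegenerately; then $\frakp_1=\frakp_2$ and the argument must be checked to go through (it does, trivially, as in the paper, since then the unique prime over $p$ divides both $\frakD_{\Ktilde/\Ftilde}$ and $(n+\delta\sqrt{\Dtilde})/2$, but you should say so rather than exclude the case). If you replace the integral-basis computation with the paper's direct identification of $\frakp_2\mid\frakD_{\Ktilde/\Ftilde}$ and handle $p\mid\gcd(D,\Dtilde)$, the rest of your outline (norm argument for one direction, $\frakp_1$-congruence and parity for the other) closes correctly.
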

		\begin{remark}
			If we work with a different CM-type of $K$ so that $\Ktilde = \Ftilde(\sqrt{2A - 2\sqrt{A^2 - B^2D}})$, then the indices $n$ are in one-to-one correspondance, but not necessarily equal.  Indeed, the correspondence would be that\[
				\delta^2\Dtilde - n^2 \in 4 D\Z \textup{ and  }
				n \equiv -\delta c_K \pmod{2D}
			\]
			if and only if
			\[
				\frac{-n + \delta\sqrt{\Dtilde}}{2D}\in 
				\frakD_{\Ktilde/\Ftilde}^{-1}.
			\]
		\end{remark}
		\begin{proof}
			We will need the factorization of $\langle p\rangle$ in $\OO_{\Ftilde}$ for any $p|D$, so we present this first. Recall that $A$ and $B$ are chosen to be in $\frac12\Z$ such that $\Dtilde = A^2 - B^2 D$.  Since $2A$ is a solution of $X^2 - 4\Dtilde \bmod D$ and $2\nmid D$, for any $p|D$, we can factor $\langle p \rangle $ in $\OO_{\Ftilde}$ as $\pp_1\pp_2$ where $\pp_1 = (2A - 2\sqrt{\Dtilde}, p)$ and $\pp_2 = (2A + 2\sqrt{\Dtilde}, p)$.  Note that $\pp_1 = \pp_2$ if and only if $p|\Dtilde$ as well as $D$.
	The norm of $\frakD_{\Ktilde/\Ftilde}$ is equal to $D$ and since $p|D$  one of $\pp_1$ or $\pp_2$ must ramify in $\Ktilde/\Ftilde$.  Since $D$ is squarefree, at most one of $\pp_1$ or $\pp_2$ ramifies and $\frakD_{\Ktilde/\Ftilde}$ has $\pp_i$-adic valuation at most 
	$1$.  Recall that $\Ktilde = \Q(\sqrt{2A+2\sqrt{\Dtilde}})$, thus $\pp_2|\frakD_{\Ktilde/\Ftilde}$.

			First assume that $\delta^2\Dtilde - n^2 \in 4 D\Z$ and that $n \equiv -\delta c_K \pmod{2D}$; we will show that $\frac{n + \delta\sqrt{\Dtilde}}{2D} \in \frakD_{\Ktilde/\Ftilde}^{-1}$. Since $\delta^2\Dtilde - n^2$ is divisible by $4$, $n$ must be congruent to $\delta$ modulo $2$ and thus $\frac{n + \delta \sqrt{\Dtilde}}{2}$ is integral.  Further, since $\frakD_{\Ktilde/\Ftilde}$ is integral, so is $(\frac{n + \delta \sqrt{\Dtilde}}{2})\frakD_{\Ktilde/\Ftilde}$.  To prove that $\frac{n + \delta\sqrt{\Dtilde}}{2D}\in\frakD_{\Ktilde/\Ftilde}^{-1}$, we will show that every prime lying over $p$ for $p|D$ either divides $\frac{n + \delta\sqrt{\Dtilde}}{2}$ or $\frakD_{\Ktilde/\Ftilde}.$  In addition, if $p$ also divides $\Dtilde$, then we will show that the unique prime $\pp$ lying over $p$ divides $\frakD_{\Ktilde/\Ftilde}$ and $\frac{n + \delta\sqrt{\Dtilde}}{2}$. Note that we have $p > 2$ since $D$ and $\Dtilde$ are assumed to be $1$ modulo $4$.

		By assumption, $2D | (n + c_K \delta)$ and by Proposition~\ref{prop:SWquantities} we have $2c_K\equiv 2A\bmod{D}$, so
				\[
					2n + 2\delta \sqrt{\Dtilde} \equiv 
					-2\delta A + 2\delta \sqrt{\Dtilde}
				 	\equiv 0  \bmod \pp_1
				\]
		and thus $v_{\pp_1}(\frac{n + \delta\sqrt\Dtilde}{2D}) >0$.  We have already seen that $\pp_2|\frakD_{\Ktilde/\Ftilde}$. Therefore, $\frac{n + \delta\sqrt\Dtilde}{2D}\in\frakD_{\Ktilde/\Ftilde}^{-1}.$

			Now we prove the reverse direction.  Assume that $\frac{n + \delta\sqrt{\Dtilde}}{2D} \in \frakD^{-1}_{\Ktilde/\Ftilde}$. Taking the absolute norm, we have
			\begin{align*}
			N_{\Ftilde/\Q} \left(\left(\frac{n + \delta \sqrt{\Dtilde}}{2D}\right) 
			\frakD_{\Ftilde/\Ktilde}\right) &= 
			\frac{n^2 - \delta^2 \Dtilde}{4D^2} \cdot 
			N_{\Ftilde/\Q}(\frakD_{\Ktilde/\Ftilde})\\
			& = \frac{n^2 - \delta^2 \Dtilde}{4D} \cdot 
			\frac{N_{\Ftilde/\Q}(\frakD_{\Ktilde/\Ftilde})}{D} \in \Z.
			\end{align*}
			Since $N_{\Ftilde/\Q}(\frakD_{\Ktilde/\Ftilde}) = D$, we have $\delta^2 \Dtilde - n^2 \in 4 D \Z$.

			To prove the congruence condition, we use the fact that $\pp_2 | \frakD_{\Ktilde/\Ftilde}$.  Since $\Norm(\frakD_{\Ktilde/\Ftilde})$ is squarefree and $p$ divides $\frac{n + \delta\sqrt{\Dtilde}}{2}\frakD_{\Ktilde/\Ftilde}$, this implies that $\pp_1 | (n + \delta \sqrt{\Dtilde})/2$.  Since $2n + 2\delta\sqrt{\Dtilde} \equiv 2n + 2\delta A \pmod{\pp_1}$, the integer $2n + 2\delta A$ is contained in $\pp_1$ and hence is $0$ modulo $p$, for all $p|D$.  This implies that $n + \delta A \equiv n + c_K\delta\equiv0 \bmod D$. We have already shown that $\delta^2\Dtilde - n^2 \in 4\Z$, which implies that $n \equiv \delta \pmod 2$. Finally, by Proposition~\ref{prop:SWquantities}, $c_K \equiv 1 \pmod 2$. Thus, $n \equiv \delta c_K \pmod 2$, and the proof is complete.
		\end{proof}

	\section{Equality of summands}\label{sec:EqualitySummands}

		By the results of the previous section, both formula~\eqref{eq:BYFormula} and formula~\eqref{eq:LVFormula} sum over the same values $\delta$ and $n$.  Thus, to prove that the formulas agree, it suffices to show that for a fixed $\delta$ and $n$, the corresponding summands of formula~\eqref{eq:BYFormula} and~\eqref{eq:LVFormula} are equal. The goal of the present section is to prove this equality.

		Throughout, we work with a fixed positive integer $\delta$ and a fixed integer $n$ such that
		\[
			D - 4\delta = \square, \quad n + c_K\delta \equiv 0 \bmod{2D}, \quad
			\textup{ and }N := \frac{\delta^2\Dtilde - n^2}{4D}\in\Z_{>0}.
		\]
		For simplicity, we write $d_u := d_u(n)$ and $d_x := d_x(n)$.  We let $\OO_{d_u}$ and $\OO_{d_x}$ denote the quadratic imaginary orders of discriminant $d_u$ and $d_x$ respectively.  

		Precisely, in this section we prove:
		\begin{theorem}\label{thm:SummandTheorem}
			Retain the assumptions from Theorem~\ref{thm:Main}.  Then for any prime $\ell$,
			\begin{equation}\label{eq:BYLVSummandEquality}
				\mu(n)R_{d_u}(\ell^{-1}N)\tilde{\rho}_{d_u}(N)
				 = \sum_{\frakl|\ell}
				\begin{cases}
					0 & \textup{if }\frakl\textup{ is split in }\Ktilde\\
					\frac{v_{\frakl}(\frakN) + 1}{2}\cdot f(\mathfrak{l}/\ell) \cdot R_{\Ktilde/\Ftilde}
					(\frakl^{-1}\frakN\frakD_{\Ktilde/\Ftilde}) & 
					\textup{otherwise},
				\end{cases}
			\end{equation}
			where $\frakN = (\frac{n+\delta\sqrt{\Dtilde}}{2D})$.
		\end{theorem}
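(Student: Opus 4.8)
The strategy is to reduce each side of~\eqref{eq:BYLVSummandEquality} to an Euler-type product over the primes dividing $N$ and then match the two products factor by factor. The left-hand side is handled by Theorem~\ref{thm:LVLocal} and the right-hand side by Theorem~\ref{thm:BYLocal}, so that the content of the statement becomes a ``reciprocity'' identity relating the splitting of primes above $p\mid N$ in $\Ktilde/\Ftilde$ to the splitting of $p$ in the imaginary quadratic orders $\OO_{d_u}$ and $\OO_{d_x}$, together with a comparison of the associated multiplicities.

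\textbf{Step 1 (Setup and reductions).} By Proposition~\ref{prop:EquivCondOnn} the ideal $\frakN=(\frac{n+\delta\sqrt\Dtilde}{2D})$ is integral and divides $\frakD_{\Ktilde/\Ftilde}^{-1}$, so $\frakN\frakD_{\Ktilde/\Ftilde}$ is an integral ideal of absolute norm $N=\frac{\delta^2\Dtilde-n^2}{4D}$, which is exactly the integer $N$ appearing on the Lauter--Viray side. Fixing a prime $\ell$, I would first check that the hypotheses of Theorems~\ref{thm:BYLocal} and~\ref{thm:LVLocal} are satisfied for this data: that there is at most one prime $\frakl\mid\ell$ of $\OO_\Ftilde$ with $v_\frakl(\frakN\frakD_{\Ktilde/\Ftilde})>0$, that such an $\frakl$ is unramified in $\Ktilde$, and that $\ell$ does not ramify in both $\OO_{d_u}$ and $\OO_{d_x}$. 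This should follow from $D$ being prime with $\Norm(\frakD_{\Ktilde/\Ftilde})=D$ (so that the unique prime of $\Ftilde$ ramifying in $\Ktilde$ lies over $D$), from the explicit expressions for $d_u,d_x$ of Proposition~\ref{prop:SWquantities}, and from the identity expressing $d_ud_x$ as a local norm from $\Q_p(\sqrt{-N})$ (\cite[Eqn. 3.6]{LV-Igusa}). Any degenerate configuration, for instance $\ell=D$ or several ramified primes over $\ell$, should force both sides to vanish and can be disposed of separately. After this step both sides are explicit products of the local factors $\varepsilon_{\Ktilde/\Ftilde}$, respectively $\varepsilon_{d_u}$, times a multiplicity.

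\textbf{Step 2 (The splitting dictionary --- the crux).} The heart of the proof is a ``reciprocity'' statement: for each rational prime $p\mid N$ and each prime $\pp\mid p$ of $\OO_\Ftilde$ dividing $\frakN\frakD_{\Ktilde/\Ftilde}$, the behaviour of $\pp$ in $\Ktilde/\Ftilde$ is controlled by the behaviour of $p$ in $\OO_{d_u}$ and $\OO_{d_x}$ --- namely $\pp$ splits in $\Ktilde$ exactly when $p$ splits in $\OO_{d_u}$ (equivalently in $\OO_{d_x}$), $\pp$ is inert in $\Ktilde$ exactly when $p$ is inert in $\OO_{d_u}$ or in $\OO_{d_x}$, and the ``genus'' case $p\mid\gcd(d_u,d_x)$ is matched by a combination of the ramification of $\pp$ in $\Ktilde/\Ftilde$ and a Hilbert-symbol condition. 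The engine is the Hilbert symbol: using $\Ktilde=\Ftilde(\sqrt{2A+2\sqrt\Dtilde})$, norm-invariance and the product formula for the Hilbert symbol, the identity $(d_u,-N)_p=(d_x,-N)_p$ (a consequence of $d_ud_x$ being a local norm from $\Q_p(\sqrt{-N})$), and the congruences $2c_K\equiv2A\pmod D$ and $c_K\equiv1\pmod 2$ from Proposition~\ref{prop:SWquantities}, one translates the local norm condition defining the splitting of $\pp$ in $\Ktilde/\Ftilde$ into conditions on $(d_u,-N)_p$ and $(d_x,-N)_p$, and identifies which rational primes are split, inert, or ramified. I expect the main obstacle to be the bookkeeping at the special primes $p\mid D$, $p\mid\Dtilde$, $p=2$, and $p=\ell$, and especially reconciling the factor $1$ that $\varepsilon_{\Ktilde/\Ftilde}$ contributes at a prime ramified in $\Ktilde/\Ftilde$ with the genus-theoretic factor $2$ (or $0$) coming from $\tilde\rho_{d_u}$; this reconciliation should come from such a $p$ splitting in $\OO_\Ftilde$ into two primes that each contribute to the product.

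\textbf{Step 3 (Assembling the product).} Granting the dictionary of Step 2, I would match the three constituents of~\eqref{eq:BYLVSummandEquality}. First, the vanishing conditions agree: by Step 2, $\frakl$ is inert in $\Ktilde/\Ftilde$ with $v_\frakl(\frakN)$ odd if and only if $\ell$ is inert in $\OO_{d_u}$ or in $\OO_{d_x}$ with $v_\ell(N)$ odd, so the two local theorems produce $0$ simultaneously. Second, on the non-vanishing locus the multiplicity $\frac{v_\frakl(\frakN)+1}{2}\,f(\frakl/\ell)$ equals $\mu(n)$: here I would split according to whether $\ell$ splits or is inert in $\OO_\Ftilde$, use $N=\Norm(\frakN\frakD_{\Ktilde/\Ftilde})$ to relate $v_\ell(N)$ to $v_\frakl(\frakN)$ and $f(\frakl/\ell)$, and observe that $\ell\mid\gcd(d_u,d_x)$ is precisely the case governing the $v_\ell(N)$-branch of $\mu$. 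Finally, for each $p\mid N$ with $p\neq\ell$, a short case analysis over the split/inert/ramified possibilities supplied by Step 2 --- keeping track of $v_\pp(\frakN\frakD_{\Ktilde/\Ftilde})$ versus $v_p(N)$ according to the residue degree of $\pp$ --- gives $\prod_{\pp\mid p}\varepsilon_{\Ktilde/\Ftilde}(p,\frakN\frakD_{\Ktilde/\Ftilde})=\varepsilon_{d_u}(p,N)$. Multiplying these identities and comparing with the local forms furnished by Theorems~\ref{thm:BYLocal} and~\ref{thm:LVLocal} yields~\eqref{eq:BYLVSummandEquality}.
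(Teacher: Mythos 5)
Your high-level strategy — localize both sides via Theorems~\ref{thm:BYLocal} and~\ref{thm:LVLocal}, establish a splitting dictionary between $\Ktilde/\Ftilde$ and $\OO_{d_u},\OO_{d_x}$, and match local factors — does mirror the structure of the paper's proof (which combines Lemma~\ref{lem:Valuations}, Proposition~\ref{prop:Splitting}, Proposition~\ref{prop:LocalFactorEquality}, and Corollary~\ref{corr:mu}). However, Step~2, which you correctly identify as the crux, contains both an imprecision and a gap that are substantive enough to derail the argument.

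First, the parenthetical ``$p$ splits in $\OO_{d_u}$ (equivalently in $\OO_{d_x}$)'' is false: the correct statement (Proposition~\ref{prop:Splitting}) is that $\frakp$ splits in $\Ktilde$ if and only if $p$ splits in \emph{at least one} of $\OO_{d_u}$, $\OO_{d_x}$. These are genuinely different, because by Proposition~\ref{prop:DuAndDxRelativelyPrime} a prime $p\mid N$ never divides both $d_u$ and $d_x$, but it may divide exactly one of them; in that case $p$ is ramified in one order and split or inert in the other, so ``split in $\OO_{d_u}$'' and ``split in $\OO_{d_x}$'' are not equivalent. Relatedly, your ``genus case $p\mid\gcd(d_u,d_x)$'' never occurs for $p\mid N$; the genus-theoretic factor $2$ in $\tilde\rho_{d_u}$ arises precisely at primes with $p\mid d_u$ and $p\nmid d_x$. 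Your proposed reconciliation — that such a $p$ splits in $\OO_\Ftilde$ into two primes each contributing — is also not what happens: Lemma~\ref{lem:Valuations} shows that for $p\mid N$ exactly one prime $\frakp\mid p$ of $\OO_\Ftilde$ divides $\frakN\frakD_{\Ktilde/\Ftilde}$, that this $\frakp$ is \emph{unramified} in $\Ktilde/\Ftilde$, and that $f(\frakp/p)=1$. So the ramified branch (factor $1$) of $\varepsilon_{\Ktilde/\Ftilde}$ never appears; the factor $2$ is matched by $\frakp$ being split in $\Ktilde$ with $v_\frakp(\frakN\frakD_{\Ktilde/\Ftilde})=1$, giving $v_\frakp+1=2$.

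Second, and more fundamentally, the route you sketch for proving the splitting dictionary via the Hilbert symbols $(d_u,-N)_p=(d_x,-N)_p$ is insufficient. The symbol $(d_u,-N)_p$ only records whether $-N$ is a local norm from $\Q_p(\sqrt{d_u})$: when $p\nmid d_u$ it equals $1$ both when $p$ splits in $\OO_{d_u}$ and when $p$ is inert with $v_p(N)$ even, so it cannot by itself distinguish split from inert. The Hilbert-symbol identity is used in the paper (in the proof of Theorem~\ref{thm:LVLocal} and of Proposition~\ref{prop:LocalFactorEquality}) to compare the multiplicities once the splitting behavior is known, but the splitting dictionary itself (Proposition~\ref{prop:Splitting}) is proved by a direct $\frakp$-adic computation: one shows that the generator $2A+2\sqrt{\Dtilde}$ of $\Ktilde/\Ftilde$ differs from $d_u$ or $d_x$ by a square modulo a suitable power of $\frakp$, using the relations $d_u=\frac{2\delta(n+\delta A)}{D}$, $d_x=A-B\sqrt{D-4\delta}-\frac{2(n+\delta A)}{D}$, the congruence $2c_K\equiv 2A\bmod D$, and a delicate case analysis, especially at $p=2$ where one has to track congruences modulo $\frakp^3$. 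That computation is the real content of the theorem, and it is missing from your proposal.
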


		In~\S\ref{subsec:Reduction}, we prove restrictions on the prime divisors of $N$.  These restrictions will prove useful in later sections, and they also allow us to give a simplified formula for $\mu(n)$. In~\S\ref{subsec:ValuationsAndSplitting}, we consider the splitting behavior in $\OO_{d_u}$ and $\OO_{d_x}$ of primes $p$ dividing $N$ and relate it to the splitting behavior in $\Ktilde$ of primes $\pp$ dividing $\frakN.$  We use this  in~\S\ref{subsec:ComparingLocalFactors} to show that for each prime $p\ne \ell$, the local factor at $p$ in formula~\eqref{eq:BYFormulaLocal} agrees with the local factor at $p$ in formula~\eqref{eq:LVFormulaLocal}.  Finally, in~\S\ref{subsec:ProofOfSummandTheorem}, we explain how these ingredients come together to prove Theorem~\ref{thm:SummandTheorem}.

		\subsection{Reduction steps}\label{subsec:Reduction}

			\begin{lemma}\label{lem:pDoesntDivideDelta}
				Retain the assumptions from Theorem~\ref{thm:Main}.  Then $\delta$ and $N = \frac{\delta^2\Dtilde - n^2}{4D}$ are relatively prime. 
			\end{lemma}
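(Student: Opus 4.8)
The plan is to argue by contradiction: suppose a prime $p$ divides $\gcd(\delta, N)$ and derive a contradiction with the hypotheses of Theorem~\ref{thm:Main}, in particular with the squarefreeness of $\Dtilde$ and the assumption $D-4\delta=\square$. First I would observe that $p\nmid D$: indeed, $D$ is prime, and if $p=D$ then from $D-4\delta=\square$ we would get $-4\delta\equiv\square\pmod D$, while $p\mid\delta$ forces $4\delta\equiv 0$, so $D\mid \square$ and hence $D\mid$ the integer whose square it is; but then $D^2\mid D-4\delta$, which is impossible for $\delta\in\Z_{>0}$ with $0<4\delta<D$ (note $D-4\delta=\square\ge 0$ and $\delta>0$ forces $4\delta\le D-1<D$, so $0\le D-4\delta<D<D^2$). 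Thus $p\ne D$.

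Next I would use the defining relation $4DN=\delta^2\Dtilde-n^2$. Since $p\mid\delta$, we get $p\mid n^2$, hence $p\mid n$; write $\delta=p\delta'$, $n=pn'$. Then $4DN=p^2(\delta'^2\Dtilde-n'^2)$, and since $p\mid N$ and $p\nmid 4D$ (as $p\ne D$ and $p$ is odd — recall $D,\Dtilde\equiv 1\bmod 4$ so $2\nmid D$, and $p\mid\delta\mid$ something forcing $p$ odd can be checked, or handle $p=2$ directly), this gives $p^3\mid 4DN=\delta^2\Dtilde-n^2$. Combined with $p^2\mid\delta^2$ and $p^2\mid n^2$, reducing $\delta^2\Dtilde-n^2\equiv 0\pmod{p^3}$ modulo $p^3$ after dividing by $p^2$ yields $\delta'^2\Dtilde\equiv n'^2\pmod p$. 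This alone is not yet a contradiction, so I would push further: the real leverage should come from combining $p\mid\delta$ with the congruence $n\equiv -\delta c_K\pmod{2D}$ and with $D-4\delta=\square$ to pin down the $p$-adic valuations more tightly, ultimately forcing $p^2\mid\Dtilde$, contradicting that $\Dtilde$ is squarefree.

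Concretely, from $p\mid\delta$ and $4DN=\delta^2\Dtilde - n^2$ with $p\mid N$, write $v_p(\delta)=a\ge 1$; then $v_p(n^2)=v_p(\delta^2\Dtilde-4DN)\ge\min(2a,v_p(N))$ — I would show $v_p(n)\ge a$ as well (if $v_p(N)\ge 2a$ immediate; if $v_p(N)<2a$ then $v_p(n^2)=v_p(N)<2a$ is odd-or-even-but-small, and one analyzes parity), and then $v_p(\delta^2\Dtilde-n^2)\ge 2a+1$ since it equals $v_p(4DN)=v_p(N)\ge 1$ added to... — the cleanest route is: $v_p(\delta^2\Dtilde)=2a+v_p(\Dtilde)$ and $v_p(n^2)=2v_p(n)$ are both even unless $v_p(\Dtilde)=1$, while $v_p(4DN)=v_p(N)$; if $v_p(\Dtilde)=0$ then $2a$ vs $2v_p(n)$ — if these are unequal the min is even and equals $v_p(N)$, forcing $v_p(N)$ even, and if equal we need the next term. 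I expect the genuinely delicate case to be when $v_p(\Dtilde)=1$, i.e. $p\mid\Dtilde$ exactly once; here $v_p(\delta^2\Dtilde)=2a+1$ is odd, and I would compare with $v_p(n^2)=2v_p(n)$: if $2v_p(n)\ne 2a+1$ then $v_p(N)=\min\{2a+1,2v_p(n)\}$, and I would use $n\equiv-\delta c_K\pmod{2D}$ together with $2c_K\equiv 2A\bmod D$ (Proposition~\ref{prop:SWquantities}) and $\Dtilde=A^2-B^2D$ to show $v_p(n)\ge a$, so $v_p(N)=2a+1$ is \emph{odd} — then I would invoke the fact (from the analysis underlying Theorem~\ref{thm:BYFormula}, or directly from $D-4\delta=\square$ and primality of $D$) that $p\ne D$ together with the structure of $\frakN\frakD_{\Ktilde/\Ftilde}$ to contradict integrality or the squarefreeness of $\Dtilde$. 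The main obstacle is organizing this $p$-adic valuation bookkeeping cleanly across the several cases ($p=2$, $p\mid\Dtilde$, $p\nmid\Dtilde$) without it becoming a tangle; I would streamline it by first reducing to $p$ odd (the parity constraints $n\equiv\delta\bmod 2$ dispatch $p=2$ quickly) and then treating the odd case uniformly via the identity $4DN=\delta^2\Dtilde-n^2$ and $v_p(\Dtilde)\le 1$.
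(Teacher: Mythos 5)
Your proposal contains a genuine gap: it aims for a contradiction with the squarefreeness of $\Dtilde$, but that is not where the contradiction lives, and in fact no such contradiction is forced. What you should be targeting instead is the standing hypothesis of Theorem~\ref{thm:Main} that $d_u(n)$ is a \emph{fundamental} discriminant — a hypothesis your proposal never invokes. Once you have, as you correctly derive, $p\mid\delta$, $p\mid n$, and $p\ne D$ (the last point is much simpler than your argument: since $D$ is prime and $D-4\delta$ is a nonnegative square, $p\le\delta<D$, so $p\nmid D$), the expression $d_u(n)=\frac{2\delta(n+\delta A)}{D}$ from Proposition~\ref{prop:SWquantities} immediately gives $p^2\mid d_u(n)$ when $p$ is odd, contradicting fundamentality. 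There is no need for the careful $p$-adic bookkeeping on $\delta^2\Dtilde - n^2$ that you outline, and that bookkeeping would not in any case produce $p^2\mid\Dtilde$: one can perfectly well have $p\mid\gcd(\delta,N)$ with $\Dtilde$ squarefree — the price is paid by $d_u$, not $\Dtilde$.

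For $p=2$, the argument is a short congruence computation rather than a valuation analysis. From $D-4\delta=\square$ with $\delta$ even, $D\equiv1\bmod 8$; then $\Dtilde = A^2 - B^2D\equiv1\bmod4$ forces $A,B\in\Z$ with $A$ odd. From $8\mid\delta^2\Dtilde-n^2$ and $2\mid\delta$ one gets $n\equiv\delta\equiv\delta A\bmod 4$, whence $16\mid d_u(n)=\frac{2\delta(n+\delta A)}{D}$, again contradicting that $d_u(n)$ is a fundamental discriminant. Your proposal, by contrast, defers $p=2$ with the remark that "parity constraints dispatch $p=2$ quickly" without carrying it out, and you yourself flag the odd-prime case as an unresolved tangle. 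The fix is not to push the valuation argument harder but to replace the target of the contradiction: use the fundamentality of $d_u(n)$, not the squarefreeness of $\Dtilde$.
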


			\begin{proof}
				First suppose that $p$ is an odd prime. If $p$ divides both  $\delta$ and $\frac{\delta^2\Dtilde - n^2}{4D}$, $p$ must also divide $n$. Since $D$ is prime and $p \leq \delta < D$, $p$ cannot divide $D$, and so $p^2$ must divide $d_u(n) = \frac{\delta(2n+\delta2A)}{D}$. This violates the hypothesis that $d_u(n)$ is the discriminant of an imaginary quadratic field.

				Now let $p = 2$ and assume that $p|N$ and $p|\delta$. Then since $D - 4\delta$ is a square, $D$ must be congruent to $1$ modulo $8$.  Since $\Dtilde = A^2 - B^2 D$ is $1$ modulo $4$, $A$ and $B$ must be integers and $A$ must be odd.  By assumption, $8 | \delta^2\Dtilde - n^2$ and $2|\delta$, so $n \equiv \delta\equiv \delta A\bmod 4$.  Thus $d_u(n) = 2\delta(n + \delta A)/D$ is $0$ modulo $16$, which gives a contradiction. 
			\end{proof}

			\begin{prop}\label{prop:DuAndDxRelativelyPrime}
				Assume that $\Dtilde$ is squarefree and fix a prime $p$ that does not divide $\delta$.  If $p|N$, then $p$ cannot divide both $d_u(n)$ and $d_x(n)$. 
			\end{prop}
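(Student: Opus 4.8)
The plan is to exploit the quadratic identity
\[
	d_u(n)\,d_x(n) = \bigl(t_xt_u - 2t_{xu^{\vee}}(n)\bigr)^2 + 4N,
\]
the rational specialization of \cite[Eqn.~3.6]{LV-Igusa} already invoked in the proof of Theorem~\ref{thm:LVLocal}: since $D - 4\delta$ is a perfect square, all of $d_u$, $d_x$ and $T := t_xt_u - 2t_{xu^{\vee}}(n)$ are rational, and because $T^2 = d_ud_x - 4N$ lies in $\Z$, so does $T$. Suppose, for contradiction, that a prime $p$ satisfies $p\nmid\delta$, $p\mid N$, $p\mid d_u$ and $p\mid d_x$, and write $s := \sqrt{D-4\delta}$. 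Starting from the explicit expressions of Proposition~\ref{prop:SWquantities}, namely $Dd_u = \delta(2n+2\delta A)$, $Dd_x = AD - BDs - (2n+2\delta A)$ and $DT = BD\delta - s(n+\delta A)$, I would first eliminate $n$ to produce the two relations
\[
	d_u = \delta(A - Bs - d_x)
	\qquad\text{and}\qquad
	s\,d_u = 2\delta(B\delta - T).
\]

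Next I would treat odd $p$. From $p\mid d_ud_x = T^2 + 4N$ and $p\mid N$ one gets $p\mid T$. Feeding $p\mid T$ and $p\mid d_u$ into $s\,d_u = 2\delta(B\delta - T)$ and using $p\nmid\delta$ gives $p\mid 2B$; doubling the first relation to $2d_u = \delta\cdot 2(A-Bs) - 2\delta d_x$ and using $p\mid d_u$, $p\mid d_x$, $p\nmid\delta$ gives $p\mid 2(A-Bs)$, hence $p\mid 2A$. Therefore $p^2\mid (2A)^2 - (2B)^2D = 4\Dtilde$, so $p^2\mid\Dtilde$, contradicting that $\Dtilde$ is squarefree.

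The case $p = 2$ is the delicate one and, I expect, the main obstacle, because $A$ and $B$ may be half-integers. If $A,B\in\frac12\Z\setminus\Z$, then Lemma~\ref{lem:intbasis} forces $D\equiv 5\bmod 8$, so $s$ is odd and $Dd_u = \delta(2n+\delta\cdot 2A)$ is odd (as $2A$ and $\delta$ are odd); since $D$ is odd, $d_u$ is then odd, so $2\nmid d_u$ and there is nothing to prove. Thus one may assume $A,B\in\Z$. Now $d_u$ and $d_x$ are even discriminants, hence each divisible by $4$, so $4\mid d_ud_x$; with $4\mid 4N$ the identity gives $4\mid T^2$, i.e.\ $2\mid T$. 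Feeding $4\mid d_u$, $2\mid T$ and $s,\delta$ odd into $s\,d_u = 2\delta(B\delta - T)$ gives $2\mid B$, and then $2\mid d_u$, $2\mid d_x$, $2\mid Bs$ and $\delta$ odd in $d_u = \delta(A - Bs - d_x)$ give $2\mid A$; hence $4\mid A^2 - B^2D = \Dtilde$, again contradicting that $\Dtilde$ is squarefree (indeed $\Dtilde\equiv 1\bmod 4$). The only genuine work is the elimination step yielding the two displayed relations and the parity bookkeeping in the case $p=2$.
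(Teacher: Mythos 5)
Your proof is correct and follows essentially the same route as the paper's: start from the identity $d_ud_x - T^2 = 4N$, deduce $p\mid T$, then use the explicit expressions of Proposition~\ref{prop:SWquantities} together with $p\nmid\delta$ to force $p\mid 2B$ and $p\mid 2A$, and conclude from $4\Dtilde = (2A)^2-(2B)^2D$ that $\Dtilde$ is not squarefree. The only difference is cosmetic: you make the $p=2$ bookkeeping (half-integer versus integer $A,B$, parity of $\delta$ and $s=\sqrt{D-4\delta}$) fully explicit, where the paper just asserts that the argument ``can be strengthened.''
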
 

			\begin{proof}
				Suppose $p$ divides both $d_u(n)$ and $d_x(n)$. Recall that we have 			
				\begin{equation}\label{eqn:defn-of-N}
					\delta^2\Dtilde - n^2 = D\left(d_x(n)d_u(n) - (t_xt_u - 2t_{xu^\vee}(n))^2\right).
				\end{equation}
				If $4pD$ divides the left hand side of this equation, then $p$ must also divide $(t_xt_u - 2t_{xu^\vee}(n))$. Using the formulations for this quantity, $d_u(n)$, and $d_x(n)$ given in Proposition~\ref{prop:SWquantities}, we see that if $p | d_u(x)$, then $p | \frac{2n+\delta 2A}{D}$. If $p | \frac{n+\delta A}{D}$ and $p |(t_xt_u - 2t_{xu^\vee}(n))$, then $p | 2B$. But, if $p$ divides all of these quantities, by considering the expression for $d_x(n)$ in Proposition~\ref{prop:SWquantities}, we see that $p$ must also divide $2A$. Furthermore, if $p = 2$, then this argument can be strengthened to show that $A$ and $B$ are even integers.  However, $A$ and $B$ must be relatively prime, because $\Dtilde = A^2-B^2D$ is assumed to be squarefree. Thus, $p$ cannot divide both $d_u(n)$ and $d_x(n)$.
			\end{proof}

			\begin{cor}\label{corr:mu}
				Retain the assumptions from Theorem~\ref{thm:Main}.  If $\ell|N$, then $\mu(n) = \frac12(v_{\ell}(N) + 1).$
			\end{cor}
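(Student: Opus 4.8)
\textbf{Proof proposal for Corollary~\ref{corr:mu}.}

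The plan is to show that, under the hypothesis $\ell \mid N$, the prime $\ell$ cannot divide $\gcd(d_u(n), d_x(n))$; the claimed formula for $\mu(n)$ then follows immediately from its definition in Theorem~\ref{thm:LVFormula}, since the first case ($\mu(n) = v_\ell(N)$) occurs precisely when $\ell \mid \gcd(d_u(n), d_x(n))$ and the second case ($\mu(n) = \frac12(v_\ell(N)+1)$) occurs otherwise.

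First I would invoke Lemma~\ref{lem:pDoesntDivideDelta}, which — under the assumptions of Theorem~\ref{thm:Main}, all of which are in force here — asserts that $\delta$ and $N = \frac{\delta^2\Dtilde - n^2}{4D}$ are relatively prime. Since by hypothesis $\ell \mid N$, it follows that $\ell \nmid \delta$. Next I would apply Proposition~\ref{prop:DuAndDxRelativelyPrime}, whose hypotheses ($\Dtilde$ squarefree, and $p = \ell$ a prime not dividing $\delta$) are now satisfied; combined with $\ell \mid N$, it yields that $\ell$ cannot divide both $d_u(n)$ and $d_x(n)$. Hence $\ell \nmid \gcd(d_u(n), d_x(n))$, so we are in the ``otherwise'' case of the definition of $\mu(n)$, giving $\mu(n) = \frac12(v_\ell(N) + 1)$, as desired.

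There is no real obstacle here: the corollary is a direct consequence of Lemma~\ref{lem:pDoesntDivideDelta} and Proposition~\ref{prop:DuAndDxRelativelyPrime}, and the only thing to be careful about is checking that the hypotheses of those two results (in particular the squarefreeness of $\Dtilde$ and the fact that $D$ is prime, used inside Lemma~\ref{lem:pDoesntDivideDelta} to rule out $p \mid D$) are indeed part of the standing assumptions of Theorem~\ref{thm:Main}, which they are.
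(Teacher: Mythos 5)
Your proof is correct and is exactly the paper's intended argument: the corollary is presented without explicit proof precisely because it follows immediately by combining Lemma~\ref{lem:pDoesntDivideDelta} (to get $\ell \nmid \delta$) with Proposition~\ref{prop:DuAndDxRelativelyPrime} (to rule out $\ell \mid \gcd(d_u(n), d_x(n))$) and then reading off the ``otherwise'' case in the definition of $\mu(n)$.
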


		\subsection{Comparing valuations and splitting behavior}
		\label{subsec:ValuationsAndSplitting}

			\begin{lemma} \label{lem:Valuations}
				Retain the assumptions from Theorem~\ref{thm:Main}. Let $n\in\Z$ be such that $2D|(n + c_K\delta)$ and that $\frac{\delta^2\Dtilde - n^2}{4D}\in\Z_{>0}$.  Let $p$ be a prime that divides $\frac{\delta^2\Dtilde - n^2}{4D}$. Then there is a unique prime $\frakp\in\OO_{\Ftilde}$ lying over $p$ such that $v_{\frakp}\left(\frac{n + \delta\sqrt{\Dtilde}}{2D}\frakD_{\Ktilde/\Ftilde}\right)$ is positive.  This prime $\frakp$ is unramified in $\Ktilde$, $f(\frakp/p) = 1$, and we have
				\[
					v_{p}\left(\frac{\delta^2\Dtilde - n^2}{4D}\right) = 
					v_{\frakp}\left(\frac{n + \delta\sqrt{\Dtilde}}{2D}\right) = 
					v_{\frakp}\left(\frac{n + \delta\sqrt{\Dtilde}}{2D}
					\frakD_{\Ktilde/\Ftilde}\right).
				\]
			\end{lemma}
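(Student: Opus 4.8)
The plan is to study the integral ideal $\frakM := \frakN\,\frakD_{\Ktilde/\Ftilde}$ of $\OO_{\Ftilde}$, where $\frakN := \bigl(\tfrac{n+\delta\sqrt{\Dtilde}}{2D}\bigr)$. By Proposition~\ref{prop:EquivCondOnn} the ideal $\frakM$ is integral, and since $\Norm(\frakD_{\Ktilde/\Ftilde}) = D$ one has $\Norm_{\Ftilde/\Q}(\frakM) = \tfrac{\delta^2\Dtilde - n^2}{4D} = N$, hence $v_p(N) = \sum_{\frakp|p} f(\frakp/p)\,v_{\frakp}(\frakM)$. The statement to prove thus reduces to: exactly one prime $\frakp|p$ satisfies $v_{\frakp}(\frakM) > 0$; for that prime $f(\frakp/p) = 1$ and $\frakp\nmid\frakD_{\Ktilde/\Ftilde}$ (which yields both $v_{\frakp}(\frakM) = v_{\frakp}(\frakN)$ and that $\frakp$ is unramified in $\Ktilde$); and $v_{\frakp}(\frakM) = v_p(N)$. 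Since $\frakD_{\Ktilde/\Ftilde}$ has norm $D$ it is supported only on primes above $D$, so for $p\ne D$ the condition $\frakp\nmid\frakD_{\Ktilde/\Ftilde}$ is automatic, and only the prime above $D$ needs separate care.

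I would first show $p$ is never inert in $\Ftilde/\Q$. From $p|N$ and $4DN = \delta^2\Dtilde - n^2$ we get $\delta^2\Dtilde\equiv n^2\pmod p$, and $p\nmid\delta$ by Lemma~\ref{lem:pDoesntDivideDelta}; so either $p\nmid n$, whence $\Dtilde$ is a nonzero square mod $p$ and $p$ splits, or $p|n$, whence $p|\Dtilde$ and $p$ ramifies. For $p = 2$, Lemma~\ref{lem:pDoesntDivideDelta} forces $\delta$ odd, hence $n$ odd, and $8 \mid \delta^2\Dtilde - n^2$ then forces $\Dtilde\equiv1\pmod8$, so $2$ splits. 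In particular $f(\frakp/p) = 1$ for every $\frakp|p$. Next I would dispose of the case $p\ne D$, where $v_{\frakp}(\frakM) = v_{\frakp}(\frakN)$ for all $\frakp|p$ and each such $\frakp$ is unramified in $\Ktilde$: if $p$ ramifies in $\Ftilde$, the unique $\frakp|p$ satisfies $v_{\frakp}(\frakN) = v_p\bigl(\Norm_{\Ftilde/\Q}\bigl(\tfrac{n+\delta\sqrt{\Dtilde}}{2}\bigr)\bigr) = v_p(N)$; if $p$ splits, writing $\langle p\rangle = \frakp_1\frakp_2$ with $\frakp_2 = \overline{\frakp_1}$ gives $v_{\frakp_1}(\frakN) + v_{\frakp_2}(\frakN) = v_{\frakp_1}(\frakN\overline{\frakN}) = v_p(N)$, and both summands cannot be positive, since then $\langle p\rangle$ would divide $\bigl(\tfrac{n+\delta\sqrt{\Dtilde}}{2}\bigr) = \bigl(\tfrac{n-\delta}{2} + \delta\cdot\tfrac{1+\sqrt{\Dtilde}}{2}\bigr)$, forcing $p|\delta$ and contradicting Lemma~\ref{lem:pDoesntDivideDelta}; so exactly one of $\frakp_1,\frakp_2$ has positive valuation, equal to $v_p(N)$.

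The remaining case $p = D$ is the crux. First I would show that $D|N$ forces $D\nmid\Dtilde$: if $D|\Dtilde = A^2 - B^2D$ then $D|A$, and squarefreeness of $\Dtilde$ gives $v_D(\Dtilde) = 1$, while $2c_K\equiv 2A\equiv 0\pmod D$ together with $n\equiv -c_K\delta\pmod{2D}$ gives $D|n$, so $v_D(\delta^2\Dtilde - n^2) = 1$ and $v_D(N) = 0$, a contradiction. Hence $\langle D\rangle = \pp_1\pp_2$ splits, and, as in the proof of Proposition~\ref{prop:EquivCondOnn}, we may take $\pp_2 = \frakD_{\Ktilde/\Ftilde}$, so that $\pp_1$ is unramified in $\Ktilde$ and $f(\pp_1/D) = 1$. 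Using $\sqrt{\Dtilde}\equiv -A\pmod{\pp_2}$ and $2n\equiv -2A\delta\pmod D$, one checks that $\pp_2\nmid\bigl(\tfrac{n+\delta\sqrt{\Dtilde}}{2}\bigr)$ (this could only fail if $D|A$, which is excluded), so $v_{\pp_2}(\frakN) = -1$ and $v_{\pp_2}(\frakM) = 0$. Therefore $\pp_1$ is the unique prime above $D$ with $v_{\pp_1}(\frakM) > 0$, and $v_D(N) = v_{\pp_1}(\frakM) = v_{\pp_1}(\frakN)$, which completes the proof.

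I expect the case $p = D$ to be the main obstacle: one has to combine the congruence conditions on $n$ and $c_K$ coming from Proposition~\ref{prop:SWquantities} with the squarefreeness of $\Dtilde$ to rule out ramification of $D$ in $\Ftilde$ in this situation, and then track the valuation of $\tfrac{n+\delta\sqrt{\Dtilde}}{2}$ separately at the two primes above $D$ --- a reduction-mod-$\pp_2$ computation that is also sensitive to whether $A$ and $B$ are integers or half-integers, cf.\ Lemma~\ref{lem:intbasis}. A secondary point requiring care is the exclusion of inert $p$, which is precisely what guarantees the factor $f(\frakp/p)$ in the formula equals $1$.
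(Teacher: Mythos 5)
Your proof is correct and follows essentially the same route as the paper's: it reduces everything to the norm relation for $\frakM = \frakN\,\frakD_{\Ktilde/\Ftilde}$, uses $\gcd(\delta,N)=1$ (Lemma~\ref{lem:pDoesntDivideDelta}) to pin down a unique prime of residue degree one, and handles $p=D$ separately using the fact that $\Norm(\frakD_{\Ktilde/\Ftilde})=D$ is prime. The two spots where your execution differs --- the explicit quadratic-residue argument ruling out inert $p$, and the direct congruence modulo $\pp_2$ in the $p=D$ case in place of the paper's appeal to integrality of $\frakM$ plus uniqueness --- are both correct and arrive at the same conclusions.
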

			\begin{remark}
				This lemma shows that the assumptions in Theorem~\ref{thm:Main} imply the assumptions in Theorem~\ref{thm:BYLocal}.
			\end{remark}
			\begin{proof}
				By Lemma~\ref{lem:pDoesntDivideDelta}, $p\nmid\delta$, so there is at most one prime in $\OO_{\Ftilde}$ lying over $p$ that divides $\frac{n + \delta\sqrt{\Dtilde}}{2}$ and this prime has inertial degree $1$ over $p$.  First consider the case when $p\nmid D$.  Since $\Norm(\mathfrak{D}_{\Ktilde/\Ftilde}) = D$, we have that for all $\frakp|p$, $\frakp$ is unramified in $\Ktilde$ and $v_{\frakp}\left(\frac{n + \delta\sqrt{\Dtilde}}{2D}\mathfrak{D}_{\Ktilde/\Ftilde}\right) = v_{\frakp}\left(\frac{n + \delta\sqrt{\Dtilde}}{2}\right)$.  As 
				\begin{equation}\label{eq:ValuationRelation}
					v_{p}\left(\frac{\delta^2\Dtilde - n^2}{4D}\right) = 
					\sum_{\frakp|p } v_{\frakp}
					\left(\frac{n + \delta\sqrt{\Dtilde}}{2D}
					\frakD_{\Ktilde/\Ftilde}\right),
				\end{equation}
				this completes the proof.

				Now consider the case when $p | D$.  If $p$ is ramified in $\Ftilde$, then $p|\Dtilde$.  However, this contradicts the assumption that $\frac{\delta^2\Dtilde - n^2}{4D}\in p\Z_{>0}$ because $\Dtilde$ is squarefree and $p\nmid\delta$.  Thus $p$ is split in $\Ftilde$.  Let $\frakp_1$ and $\frakp_2$ denote the two primes lying over $p$.  Since $\Norm(\frakD_{\Ktilde/\Ftilde}) = D$ and $D$ is a prime,  there is at most one prime lying over $p$ that divides $\frakD_{\Ktilde/\Ftilde}$; we may assume that this prime is $\frakp_2$. Hence $v_{\frakp_1}\left(\frac{n + \delta\sqrt{\Dtilde}}{2D}\frakD_{\Ktilde/\Ftilde}\right) = v_{\frakp_1}\left(\frac{n + \delta\sqrt{\Dtilde}}{2}\right) - 1.$ By the assumption on $n$ and Proposition \ref{prop:EquivCondOnn}, $\frac{n + \delta\sqrt{\Dtilde}}{2D}\frakD_{\Ktilde/\Ftilde}$ is integral, and thus $v_{\frakp_1}\left(\frac{n + \delta\sqrt{\Dtilde}}{2}\right) > 0.$  This in turn implies that $\frac12(n + \delta\sqrt{\Dtilde})$ is a $\frakp_2$-adic unit.  Combining this with~\eqref{eq:ValuationRelation}, we see that
		$v_{p}\left(\frac{\delta^2\Dtilde - n^2}{4D}\right) = 
		v_{\frakp_1}\left(\frac{n + \delta\sqrt{\Dtilde}}{2D}\right)$ and $\frakp_1\nmid\frakD_{\Ktilde/\Ftilde}$ as desired.
			\end{proof}

			\begin{prop}\label{prop:Splitting}
				Retain the assumptions from Theorem~\ref{thm:Main}.
				Fix a prime $p$ that divides $\frac{\delta^2\Dtilde - n^2}{4D}$ and let $\frakp|p$ be the unique prime given in Lemma~\ref{lem:Valuations}.  The prime ideal $\frakp$ splits in $\Ktilde$ if and only if $p$ splits in at least one of $\OO_{d_x}$ or $\OO_{d_u}$.  Similarly, $\frakp$ is inert in $\Ktilde$ if and only if $p$ is inert in at least one of $\OO_{d_x}$ or $\OO_{d_u}$.
			\end{prop}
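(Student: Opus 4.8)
The plan is to work $p$-adically and reduce the question to comparing square classes. First, since $p \mid N$ and $p \nmid \delta$ (Lemma~\ref{lem:pDoesntDivideDelta}), Lemma~\ref{lem:Valuations} shows that $\frakp$ is unramified in $\Ktilde$ with $f(\frakp/p) = 1$; hence $\frakp$ is either split or inert in $\Ktilde$, and since $\Ktilde = \Ftilde(\sqrt{2A + 2\sqrt{\Dtilde}})$ it is split exactly when $2A + 2\sqrt{\Dtilde}$ is a square in the completion $(\Ftilde)_{\frakp}$ and inert exactly when it is not. As $f(\frakp/p) = 1$, the prime $p$ cannot stay inert in $\Ftilde$: it splits or ramifies, and it ramifies exactly when $p \mid \Dtilde$, which (using that $\Dtilde$ is squarefree, $p \nmid \delta$, and $p \mid N$) forces $p \nmid D$ and $p \mid n$. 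I would split into these two cases.

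The common ingredients are: the image $\tilde{s}$ of $\sqrt{\Dtilde}$ in $(\Ftilde)_{\frakp}$ satisfies $\delta\tilde{s} \equiv -n \pmod{\frakp}$ (indeed $\pmod{\frakp^2}$ when $p = D$), because $\frakp \mid \bigl(\tfrac{n + \delta\sqrt{\Dtilde}}{2}\bigr)$ by Lemma~\ref{lem:Valuations}; the identity $(2A + 2\sqrt{\Dtilde})(2A - 2\sqrt{\Dtilde}) = 4(A^2 - \Dtilde) = (2B)^2 D$; and the formulas of Proposition~\ref{prop:SWquantities}, which, writing $w := \sqrt{D - 4\delta}$ and $r := \tfrac{2n + 2\delta A}{D} \in \Z$, give $d_u = \delta r$, $d_x = A - Bw - r$, $t := t_xt_u - 2t_{xu^\vee}(n) = B\delta - \tfrac12 wr$, $2A\delta = rD - 2n$, and $d_ud_x = t^2 + 4N$ (the last one from~\eqref{eqn:defn-of-N}).

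In the case $p \mid \Dtilde$: then $\frakp$ ramifies over $p$ and $p$ is odd, so $\tilde{s}$ has $\frakp$-valuation $1$ and $2A + 2\tilde{s}$ is a $\frakp$-adic unit congruent to $2A$ modulo $\frakp$; since the residue field is $\F_p$ this is a square in $(\Ftilde)_{\frakp}$ iff $2A$ is a square modulo $p$. One checks that $p \nmid d_u$ and $p \nmid 2B$ (otherwise $p^2 \mid \Dtilde$); then $D$ is a square modulo $p$, and using $2A\delta \equiv rD \pmod p$ one gets $2A \equiv d_u D \delta^{-2} \pmod p$, so $2A$ is a square modulo $p$ iff $d_u$ is. In the case where $p$ splits in $\Ftilde$ (which in particular includes $p = 2$): here $(\Ftilde)_{\frakp} = \Q_p$, and the norm identity gives $2A + 2\tilde{s} \sim D(2A - 2\tilde{s})$ in $\Q_p^\times/(\Q_p^\times)^2$; I would then evaluate the square class of $2A - 2\tilde{s} = \delta^{-1}\bigl(rD - 2(n + \delta\tilde{s})\bigr)$ using that $v_p(n + \delta\tilde{s})$ is large compared with $v_p(rD)$ (Lemma~\ref{lem:Valuations}), obtaining $2A + 2\tilde{s} \sim \delta r = d_u$ when $p \nmid d_u$, and, when $p \mid d_u$ (so $p \mid r$, $p \mid 2B$, and $\tilde{s} \equiv A \pmod p$), obtaining $2A + 2\tilde{s} \sim A$ with $A \equiv d_x \pmod p$. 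So in each case $\frakp$ splits (resp.\ is inert) in $\Ktilde$ iff the relevant discriminant among $d_u$ (when $p \nmid d_u$) and $d_x$ (when $p \mid d_u$) is (resp.\ is not) a square in $\Q_p$.

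The last step is to rephrase ``$d$ is a square in $\Q_p$'' as ``$p$ splits in $\OO_d$'': for $p \nmid d$ these coincide, and $p \mid d_u$ forces $p$ to ramify in $\OO_{d_u}$. From $d_ud_x \equiv t^2 \pmod p$ (valid since $p \mid N$): if $p \nmid t$ then $d_u$ and $d_x$ lie in the same square class modulo $p$, so $p$ behaves the same way in $\OO_{d_u}$ and in $\OO_{d_x}$; and if $p \mid t$ then $p$ divides exactly one of $d_u, d_x$ by Proposition~\ref{prop:DuAndDxRelativelyPrime}, so $p$ ramifies in exactly one of the two orders and the clause ``splits (resp.\ is inert) in at least one of $\OO_{d_u}, \OO_{d_x}$'' just refers to the other one. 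Putting these facts together with the square-class computation of the previous paragraph yields both equivalences in the proposition. The step I expect to be the real work is the square-class bookkeeping itself --- pinning down which of $d_u, d_x$ carries the square class of $2A + 2\sqrt{\Dtilde}$ in every subcase --- and, as always, the prime $p = 2$, where $\Q_2^\times/(\Q_2^\times)^2$ has order $8$ and the congruences above must be sharpened to hold modulo $8$.
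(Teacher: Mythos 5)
Your strategy is essentially the paper's: use Lemma~\ref{lem:Valuations} to see that $\frakp$ is unramified with $f(\frakp/p)=1$, so that $\frakp$ splits in $\Ktilde$ iff $2A+2\sqrt{\Dtilde}$ is a square in $(\Ftilde)_\frakp$; then pin down that square class via the norm identity $(2A+2\sqrt{\Dtilde})(2A-2\sqrt{\Dtilde})=(2B)^2D$ together with the fact that $\frakp$ divides $(n+\delta\sqrt{\Dtilde})/2$ to high order, landing on $d_u$ when $p\nmid d_u$ and on $d_x$ when $p\mid d_u$; and finally use $d_ud_x\equiv t^2\pmod{4N}$ together with Proposition~\ref{prop:DuAndDxRelativelyPrime} to convert the single-discriminant statement into the ``at least one of $\OO_{d_u},\OO_{d_x}$'' form. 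That is exactly the paper's argument, just organized around whether $p$ ramifies or splits in $\Ftilde$ rather than the paper's top-level split by $p\mid d_u$ versus $p\nmid d_u$. Your explicit handling of the $p\mid\Dtilde$ (ramified) case is a welcome clarification: in that case $p\nmid 2B$ and $p\nmid d_u$, so the paper absorbs it silently into the $p>2$, $p\nmid d_u$, $v_p(2B)=0$ subcase; you have made the argument there transparent.

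The genuine gap is $p=2$, which you flag but do not carry out, and which is the bulk of the paper's proof. For $p=2$ one has $(\Ftilde)_\frakp\cong\Q_2$, and since $\Q_2^\times/(\Q_2^\times)^2$ is not detected modulo~$2$, the congruences you write (e.g.\ $2A+2\widetilde{s}\sim d_u$ coming from $2A-2\widetilde{s}\equiv \delta^{-1}rD\pmod{p}$) must be pushed to congruences modulo $\frakp^3$, i.e.\ modulo~$8$, and the needed precision is not automatic from Lemma~\ref{lem:Valuations} alone. The paper has to split into the cases $A,B\in\tfrac12\Z\setminus\Z$; $A,B\in\Z$ with $d_u\equiv 8\bmod 16$; and $A,B\in\Z$ with $d_u\equiv4\bmod 8$, the last subdivided again by $B\bmod 8$, and in several of these it uses facts such as $D\equiv5\bmod8$ when $D-4\delta$ is an odd square and the hypothesis that $d_u$ is a fundamental discriminant to rule out $d_u\equiv 0\bmod 16$ and $d_u\equiv 12\bmod 16$. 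None of this bookkeeping is present in your sketch. Until the $p=2$ computations are actually carried out to $\frakp^3$-precision and shown to land on the correct one of $d_u$ or $d_x$ in each subcase, the proof is incomplete at precisely the step you yourself identify as ``the real work.''
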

			\begin{proof}
				By Lemma~\ref{lem:pDoesntDivideDelta} and Proposition~\ref{prop:DuAndDxRelativelyPrime}, $p$ does not ramify in both $\OO_{d_u}$ and $\OO_{d_x}$.  Therefore, if $p$ is not split in either $\OO_{d_x}$ or $\OO_{d_u}$, then $p$ is inert in at least one of $\OO_{d_u}$ and $\OO_{d_x}$.  Thus, the second claim of the lemma follows from the first claim.

				As noted above, $\frac{\delta^2\Dtilde - n^2}{4D} = \frac{d_ud_x - (t_xt_u - 2t_{xu^{\vee}})^2}{4}$.  Since $4Dp|(\delta^2\Dtilde - n^2)$, the product $d_u d_x$ is congruent to a square modulo $p$.  Therefore, if $p$ is split in one of $\OO_{d_x}$ or $\OO_{d_u}$, then $p$ cannot be inert in the other order.  If $p > 2$, the proof breaks into cases depending on whether or not $p$ ramifies in $\OO_{d_u}$. Recall, $d_u=\frac{2\delta (n+\delta A)}{D}$. Assume that $p|d_u$ and $p > 2$. Then, since $p\nmid\delta$ (Lemma~\ref{lem:pDoesntDivideDelta}), $2n + 2A\delta$ and $n + \delta\sqrt{\Dtilde}$ both have $\frakp$-adic valuation strictly greater than $v_{p}(D)$, and hence so does $2A - 2\sqrt{\Dtilde}$.  This in turn implies that $p$ divides $2B$ and so $d_x =A-B\sqrt{D-4\delta}-\frac{2(n+\delta A)}{D} \equiv A\bmod p$.  Recall that $\sqrt{2A + 2\sqrt{\Dtilde}}$ generates the extension $\Ktilde/\Ftilde$.  Consider the product
				\begin{equation}\label{eq:Congruence1}
					(2A + 2\sqrt{\Dtilde})d_x \equiv 4A\cdot A\pmod\frakp.
				\end{equation}
				Since $\Dtilde$ is squarefree, $4A^2$ is a nonzero square modulo $\frakp$. Then~\eqref{eq:Congruence1} implies that $2A + 2\sqrt\Dtilde$ and $d_x$ are nonzero modulo $\frakp$ and that $2A + 2\sqrt{\Dtilde}$ is a square modulo $\frakp$ if and only if $d_x$ is a square modulo $p$.  This shows that $\frakp$ splits in $\Ktilde$ if and only if $p$ splits in $\OO_{d_x}$. 

				Suppose that $p \nmid d_u$ and $p > 2$.  Then by the argument above, $2A - 2\sqrt{\Dtilde}$ is a $\frakp$-adic unit.  Thus, if $p|2B$, we must have that $\frakp | (2A + 2\sqrt{\Dtilde})$.  We will show that $(2A + 2\sqrt{\Dtilde})d_u$ is congruent to a nonzero square modulo $\pp^{2v_p(2B) + 1}$.  This will show that $\frakp$ splits in $\Ktilde$ if and only if $p$ splits in $\OO_{d_u}$.

				Since $2A - 2\sqrt{\Dtilde}$ is a $\pp$-adic unit, $v_{\pp}(2A + 2\sqrt{\Dtilde}) = 2v_p(2B).$  By assumption, we also have that $v_{\pp}(\frac{n + \delta\sqrt{\Dtilde}}{2D})$ is positive, so
				\[
					(2A + 2\sqrt\Dtilde)\delta^2\frac{2n/\delta + 2\sqrt{\Dtilde}}{D} \in \pp^{2v_p(2B) + 1}.
				\]
				From this, we see that
				\[
					(2A + 2\sqrt{\Dtilde})d_u\equiv \frac{\delta^2}{D}(2A + 2\sqrt{\Dtilde})(2A - 2\sqrt{\Dtilde}) \equiv \delta^2(2B)^2\bmod{\pp^{2v_p(2B) + 1}}.
				\]
				By Lemma~\ref{lem:pDoesntDivideDelta}, $p\nmid\delta$, so we obtain our result.

				Henceforth, we assume that $p = 2$.  Suppose that $A$ and $B$ are half-integers, i.e., that $2A$ and $2B$ are odd integers.  Then $\frac{2n + 2\delta\sqrt{\Dtilde}}{D}$ is zero modulo $\pp^3$, so $d_u = \delta^2\frac{2A + 2n/\delta}{D}\equiv \delta^2\frac{2A - 2\sqrt\Dtilde}{D}\bmod{\pp^3}.$  Thus
				\[
					(2A + 2\sqrt{\Dtilde})d_u\equiv \delta^2(2B)^2\bmod{\pp^3}.
				\]
				Since $\delta2B$ is odd, this shows that $\frakp$ splits in $\Ktilde$ if and only if $p$ splits in $\OO_{d_u}$.

				If $A$ and $B$ are integers, then $d_u$ is necessarily divisible by $2$ and $\frac{A + \sqrt{\Dtilde}}{2}\in\OO_{\Ftilde}$.  Suppose that $d_u \equiv 8 \bmod{16}$ so $\frac{A + n/\delta}{2} \equiv 0 \bmod 2$.  Then $\frac{A - \sqrt{\Dtilde}}{2} = \frac{A + n/\delta}{2} - \frac{n/\delta + \sqrt{\Dtilde}}{2}$ is $0$ modulo $\pp$.  The discriminants $D$ and $\Dtilde = A^2 - B^2D$ are $1$ modulo $4$, so $A$ must be odd and $B$ must be even.  Since $\frac{A - \sqrt\Dtilde}{2}\in \pp$, $A^2\equiv\Dtilde\bmod{8}$ and so $B$ must be divisible by $4$. Thus $\frac{A + \sqrt{\Dtilde}}{2}$ is a $\pp$-adic unit.  Consider 
				\[
					\frac{A + \sqrt{\Dtilde}}{2} - d_x \equiv \frac{-A + \sqrt{\Dtilde}}{2} + B\sqrt{D - 4\delta}\bmod{\pp^3}.
				\]
				Since $\Norm(\frac{-A + \sqrt\Dtilde}{2}) = B^2D/4$ and $D$ is $1$ modulo $4$, $v_{\pp}(\frac{-A + \sqrt\Dtilde}{2}) = v_p(B^2/4)$.  If $v_p(B) \geq 3$, then $\frac{A + \sqrt{\Dtilde}}{2} \equiv d_x\bmod{\pp^3}$.  If $v_p(B) = 2$, then $v_{\pp}(\frac{-A + \sqrt\Dtilde}{2})$ is also $2$, so the sum $\frac{-A + \sqrt{\Dtilde}}{2} + B\sqrt{D - 4\delta}$ has $\pp$-adic valuation at least $3$.  Therefore, in all cases,  $\frac{A + \sqrt{\Dtilde}}{2} \equiv d_x\bmod{\pp^3}$ and so $\frakp$ splits in $\Ktilde$ if and only if $p$ splits in $\OO_{d_u}$.

				Finally we suppose that $d_u \equiv 4 \bmod 8$.  By assumption, $d_u$ is fundamental which implies that $\frac{A + n/\delta}{2}$ is $3$ modulo $4$.  Since $d_x$ is a quadratic discriminant and congruent to $A$ modulo $4$, $A$ must be congruent to $1$ modulo $4$, so $\frac{A - n/\delta}{2}$ is $2$ modulo $4$. Both $\frac{A - n/\delta}{2}$ and $\frac{n/\delta + \sqrt{\Dtilde}}{2}$ are $\pp$-adic uniformizers, hence their sum and difference both have $\pp$-adic valuation at least $2$.  Moreover, at most one of the sum and difference have $\pp$-adic valuation exactly equal to $2$.  In particular, $\frac{A + \sqrt{\Dtilde}}{2}$ has positive valuation, so $v_p(B^2/4)$ must be positive (so $B$ is $0$ modulo $4$) and $v_p(B^2/4) = v_{\pp}(\frac{A + \sqrt\Dtilde}{2}).$  From this, we can see that $\pp$ splits in $\Ktilde$ if 
				\begin{equation}\label{eq:GeneratorKtilde}
					\left(\frac{A + \sqrt\Dtilde}{2}\right)
					\left(\frac{A - \sqrt\Dtilde}{2}\right)^24B^{-2} = 
					\left(\frac{A - \sqrt\Dtilde}{2}\right)D
				\end{equation}
				is a square modulo $\pp^3$, and that $\pp$ is inert in $\Ktilde$ if~\eqref{eq:GeneratorKtilde} is a non-square modulo $\pp^3$.

				If $B \equiv 4 \bmod 8$, then $v_{\pp}(\frac{A + \sqrt{\Dtilde}}{2}) = 2$ and $v_{\pp}(\frac{-A + 2n/\delta+ \sqrt{\Dtilde}}{2})\geq 3$.  Therefore
				\[
					\frac{A - \sqrt\Dtilde}{2} \equiv n/\delta \equiv A + 4
					\bmod \pp^3, \quad \textup{ and }
					d_x = A - B\sqrt{D - 4\delta} - d_u/\delta \equiv A + 4 + 4 
					\equiv A \bmod\pp^3,
				\]
				so $\left(\frac{A - \sqrt\Dtilde}{2}\right)D\cdot d_x$ is equivalent to $D(A^2 + 4A)$ modulo $\pp^3$.  Since $D - 4\delta$ is a square and $\delta$ is odd, $D$ must be $5$ modulo $8$.  Thus $D(A^2 + 4A)\equiv 5(1 + 4) \equiv 1 \bmod 8$, so $\frakp$ splits in $\Ktilde$ if and only if $p$ splits in $\OO_{d_x}$.  If $B \equiv 0\bmod 8$, then $v_{\pp}(\frac{A + \sqrt{\Dtilde}}{2}) \geq 3$ and so $\frac{A - \sqrt\Dtilde}{2} \equiv A \bmod\pp^3$.  We also have $d_x \equiv A + 4\bmod 8$.  Thus, as above, $\left(\frac{A - \sqrt\Dtilde}{2}\right)D\cdot d_x\equiv D(A^2 + 4A) \equiv 1\bmod\pp^3$.  This completes the proof.
			\end{proof}

		\subsection{Comparing $\varepsilon_{d_u}$ and 
		$\varepsilon_{\Ktilde/\Ftilde}$}\label{subsec:ComparingLocalFactors}

			\begin{prop}\label{prop:LocalFactorEquality}
				Retain the assumptions from Theorem~\ref{thm:Main}.
				Let $n\in\Z$ be such that $2D|(n + c_K\delta)$ and that $\frac{\delta^2\Dtilde - n^2}{4D}\in\ell\Z_{>0}$.  Fix a prime $p\neq\ell$ that divides $N := \frac{\delta^2\Dtilde - n^2}{4D}$.  Then
				\[
					\varepsilon_{d_u}(p,N) = \varepsilon_{\Ktilde/\Ftilde}(p, \frakN\frakD_{\Ktilde/\Ftilde}).
				\]
			\end{prop}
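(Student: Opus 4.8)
The plan is to evaluate both sides explicitly and match them in three cases according to how $p$ behaves in $\OO_{d_u}$. The two key inputs are Lemma~\ref{lem:Valuations}, which reduces the right-hand side to a single local factor, and Proposition~\ref{prop:Splitting}, which transfers the splitting behavior of $\frakp$ in $\Ktilde$ to that of $p$ in $\OO_{d_u}$ and $\OO_{d_x}$.

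First I would analyze the right-hand side. By Lemma~\ref{lem:Valuations}, since $p\mid N$ there is a \emph{unique} prime $\frakp$ of $\OO_{\Ftilde}$ above $p$ with $v_{\frakp}(\frakN\frakD_{\Ktilde/\Ftilde})>0$; moreover $\frakp$ is unramified in $\Ktilde$, has $f(\frakp/p)=1$, and $v_{\frakp}(\frakN\frakD_{\Ktilde/\Ftilde})=v_p(N)$. Hence the product defining $\varepsilon_{\Ktilde/\Ftilde}(p,\frakN\frakD_{\Ktilde/\Ftilde})$ has exactly one factor, and
\[
\varepsilon_{\Ktilde/\Ftilde}(p,\frakN\frakD_{\Ktilde/\Ftilde})=
\begin{cases}
\frac12\bigl(1+(-1)^{v_p(N)}\bigr)&\text{if }\frakp\text{ is inert in }\Ktilde,\\
v_p(N)+1&\text{if }\frakp\text{ is split in }\Ktilde.
\end{cases}
\]

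If $p\nmid d_u$, then $p$ is split or inert in $\OO_{d_u}$, and $\varepsilon_{d_u}(p,N)$ equals $v_p(N)+1$ or $\frac12(1+(-1)^{v_p(N)})$ respectively; by Proposition~\ref{prop:Splitting}, $\frakp$ is correspondingly split or inert in $\Ktilde$, so the two local factors agree. If $p\mid d_u$, then by Lemma~\ref{lem:pDoesntDivideDelta} we have $p\nmid\delta$, hence by Proposition~\ref{prop:DuAndDxRelativelyPrime} $p\nmid d_x$, so $p$ is split or inert in $\OO_{d_x}$, and (since $\frakp$ is unramified in $\Ktilde$ by Lemma~\ref{lem:Valuations} and $p$ is ramified, hence neither split nor inert, in $\OO_{d_u}$) Proposition~\ref{prop:Splitting} gives that $\frakp$ is split in $\Ktilde$ exactly when $p$ is split in $\OO_{d_x}$, and inert in $\Ktilde$ exactly when $p$ is inert in $\OO_{d_x}$. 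The next step is to show $v_p(N)=1$: by~\eqref{eqn:defn-of-N} we have $4N=d_ud_x-(t_xt_u-2t_{xu^{\vee}})^2$, and since $p\mid d_u$ and $p\mid N$, $p$ divides $t_xt_u-2t_{xu^{\vee}}$; for odd $p$ the hypothesis that $d_u$ is a \emph{fundamental} discriminant forces $v_p(d_u)=1$, whence $v_p(4N)=v_p(d_ud_x)=1$ because $p\nmid d_x$. For $p=2$ one has $v_2(d_u)\in\{2,3\}$ (again because $d_u$ is fundamental) and $d_x$ is an odd discriminant, so $d_x\equiv1\bmod 4$; a short case analysis on $v_2(d_u)$—when $v_2(d_u)=2$ one writes $N=\frac{d_u}{4}d_x-s_0^2$ with $s_0$ odd and $\frac{d_u}{4}\equiv3\bmod4$, so $N\equiv2\bmod4$, while when $v_2(d_u)=3$ the valuation of $(t_xt_u-2t_{xu^{\vee}})^2$ must exceed $3$—again yields $v_2(N)=1$. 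Finally, recall from the proof of Theorem~\ref{thm:LVLocal} (via \cite[Eqn. 3.6]{LV-Igusa}) that $(d_u,-N)_p=(d_x,-N)_p$. If $p$ splits in $\OO_{d_x}$ then $d_x$ is a square in $\Q_p^{\times}$, so $(d_u,-N)_p=1$ and $\varepsilon_{d_u}(p,N)=2=v_p(N)+1$, matching the right-hand side; if $p$ is inert in $\OO_{d_x}$ then $\Q_p(\sqrt{d_x})/\Q_p$ is unramified, so $(d_u,-N)_p=(d_x,-N)_p=-1$ because $v_p(N)=1$ is odd, whence $\varepsilon_{d_u}(p,N)=0=\frac12(1+(-1)^{v_p(N)})$, again matching. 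This covers all cases.

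I expect the main obstacle to be the case $p\mid d_u$, and within it the subcase $p=2$: establishing $v_2(N)=1$ requires careful $2$-adic bookkeeping in $4N=d_ud_x-(t_xt_u-2t_{xu^{\vee}})^2$ and essential use of the hypothesis that $d_u$ is a fundamental discriminant (so that $v_2(d_u)\le 3$) together with the parity and residue of $d_x$ modulo $4$.
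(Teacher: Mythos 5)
Your proof is correct and follows essentially the same route as the paper's: both reduce the right-hand side to a single local factor via Lemma~\ref{lem:Valuations}, invoke Proposition~\ref{prop:Splitting} to match the splitting type of $\frakp$ in $\Ktilde$ to that of $p$ in $\OO_{d_u}$ and $\OO_{d_x}$, handle the ramified case $p\mid d_u$ by showing $v_p(N)=1$, and then use the Hilbert symbol identity $(d_u,-N)_p=(d_x,-N)_p$ from~\cite[Eqn.~3.6]{LV-Igusa}. The only organizational difference is that you split primarily on $p\mid d_u$ versus $p\nmid d_u$, while the paper splits on $\frakp$ inert versus split; the resulting four subcases coincide. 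One point worth noting: your $2$-adic bookkeeping in the ramified case (using $v_2(d_u)\in\{2,3\}$ for a fundamental discriminant and $d_x\equiv1\bmod 4$ to force $v_2(N)=1$) is actually more explicit than the paper's, which simply asserts ``$p^2$ does not divide $d_u$''---a statement that, taken literally, fails for $p=2$ when $p$ ramifies in $\OO_{d_u}$, though the conclusion $v_p(N)=1$ remains correct as your analysis confirms.
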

			\begin{proof}
				By Lemma~\ref{lem:Valuations}, there is a unique prime $\frakp$ lying over $p$ such that $v_{\frakp}(\frakN\frakD_{\Ktilde/\Ftilde})$ is positive.  Thus, 
				\[
					\varepsilon_{\Ktilde/\Ftilde}(p, \frakN\frakD_{\Ktilde/\Ftilde}) = 
						\begin{cases}
							\frac12(1 + (-1)^{v_{\pp}(\frakN\frakD_{\Ktilde/\Ftilde})}) & 
							\textup{if }\pp\textup{ is inert in }\Ktilde\\
							v_{\pp}(\frakN\frakD_{\Ktilde/\Ftilde}) + 1& 
							\textup{if }\pp\textup{ is split in }\Ktilde\\
							1 & \textup{otherwise}.
						\end{cases}
				\]

				Assume that $\frakp$ is inert in $\Ktilde.$  Then, by Proposition~\ref{prop:Splitting}, $p$ is inert in at least one of $\OO_{d_u}$ or $\OO_{d_x}$.  If $p$ is inert in $\OO_{d_u}$, then
				\[
					\varepsilon_{d_u}(p, N) = \frac12(1 + (-1)^{v_{p}( N)})
					= \frac12(1 + (-1)^{v_{\pp}( \frakN\frakD_{\Ktilde/\Ftilde})})
					= \varepsilon_{\Ktilde/\Ftilde}(p, \frakN\frakD_{\Ktilde/\Ftilde}),
				\]
				as desired.  (The middle equality follows from Lemma~\ref{lem:Valuations}.)  If $p$ is not inert in $\OO_{d_u}$, then $p$ must be inert in $\OO_{d_x}$.  The equality $\frac{\delta^2\Dtilde - n^2}{4D} = \frac{d_ud_x - (t_xt_u - 2t_{xu^{\vee}})^2}{4}$, shows that $d_ud_x$ is congruent to a square modulo $p$.  Thus $p$ is ramified in $\OO_{d_u}$.  In addition, by Lemma~\ref{lem:Valuations} and since $p^2$ does not divide $d_u$, 
				\[
					v_p\left(\frac{\delta^2\Dtilde - n^2}{4D}\right) = v_{\pp}(\frakN\frakD_{\Ktilde/\Ftilde}) = 1.
				\]  
		Since $d_x$ is not a square modulo $p$ and the $p$-valuation of $N$ is odd, it follows again from~\cite[Ch III, Thm 1]{Serre} that
				\[
					\left(d_u, \frac{n^2 - \delta^2\Dtilde}{4D}\right)_p = 
					\left(d_x, \frac{n^2 - \delta^2\Dtilde}{4D}\right)_p = -1.
				\]
				Thus $\varepsilon_{d_u}(p, \ell^{-1}N) = 0 = \frac12(1 + (-1)^{v_{\pp}(\frakN\frakD_{\Ktilde/\Ftilde})}) = \varepsilon_{\Ktilde/\Ftilde}(p, \frakN\frakD_{\Ktilde/\Ftilde})$.

				If $\frakp$ is not inert in $\Ktilde$, then, by Lemma~\ref{lem:Valuations}, $\frakp$ is split in $\Ktilde$.  By Proposition~\ref{prop:Splitting}, this implies that $p$ is split in at least one of $\OO_{d_x}$ or $\OO_{d_u}$.  If $p$ is split in $\OO_{d_u}$, then
				\[
					\varepsilon_{d_u}(p, N) = v_{p}(N) + 1
							= v_{\pp}(\frakN\frakD_{\Ktilde/\Ftilde}) + 1
							= \varepsilon_{\Ktilde/\Ftilde}(p, \frakN\frakD_{\Ktilde/\Ftilde}).
				\]
				  If $p$ is not split in $\OO_{d_u}$, then $p$ is split in $\OO_{d_x}$, and the same arguments as above show that $p$ is ramified in $\OO_{d_u}$, $v_p(\frac{\delta^2\Dtilde - n^2}{4D}) = 1$. Furthermore, 
				\[
					\left(d_u, \frac{n^2 - \delta^2\Dtilde}{4D}\right)_p = 
					\left(d_x, \frac{n^2 - \delta^2\Dtilde}{4D}\right)_p = 1
				\]
				and so $\varepsilon_{d_u}(p, N) = 2 = v_{p}(N) + 1
				= v_{\pp}(\frakN\frakD_{\Ktilde/\Ftilde}) + 1 = \varepsilon_{\Ktilde/\Ftilde}(p, \frakN\frakD_{\Ktilde/\Ftilde})$.  This completes the proof.
			\end{proof}

		\subsection{Proof of Theorem~\ref{thm:SummandTheorem}}
		\label{subsec:ProofOfSummandTheorem}

			Let $\frakl$ denote the prime lying over $\ell$ such that $v_{\frakl}(\frac{n + \delta\sqrt{\Dtilde}}{2D}\frakD_{\Ktilde/\Ftilde})$ is positive; this is unique by Lemma~\ref{lem:Valuations}.  If $\frakl$ is split in $\Ktilde$, then by Theorem~\ref{thm:BYLocal} the right-hand side of~\eqref{eq:BYLVSummandEquality} is zero.  Additionally, by Proposition~\ref{prop:Splitting}, if $\frakl$ is split in $\Ktilde$, then $\ell$ is split in $\OO_{d_u}$ or $\OO_{d_x}$.  By Theorem~\ref{thm:LVLocal}, this implies that the left-hand side of~\eqref{eq:BYLVSummandEquality} is zero.

			Since, by Lemma~\ref{lem:Valuations}, $\frakl$ is unramified in $\Ktilde$, we are left to consider the case when $\frakl$ is inert in $\Ktilde.$  By Proposition~\ref{prop:Splitting} this coincides with the case when $\ell$ is inert in at least one of $\OO_{d_u}$ and $\OO_{d_x}$.  First assume that $v_{\frakl}(\frakN)$ is even; by Lemma~\ref{lem:Valuations}, $v_{\ell}(N)$ is also even.  Then, by Theorems~\ref{thm:BYLocal} and~\ref{thm:LVLocal}, both sides of~\eqref{eq:BYLVSummandEquality} are $0$.

			Now suppose that $\frakl$ is inert in $\Ktilde$ and that $v_{\frakl}(\frakN)$ is odd.  By Lemma~\ref{lem:Valuations}, if $\frakl'|\ell$ is a prime in $\Ftilde$ different from $\frakl$, then $\frakl'^{-1}\frakN\frakD_{\Ktilde/\Ftilde}$ is \emph{not} integral.  Therefore, $R(\frakl'^{-1}\frakN\frakD_{\Ktilde/\Ftilde}) = 0$ and the right-hand side of~\eqref{eq:BYLVSummandEquality} reduces to
			\[
				\frac{v_{\frakl}(\frakN) + 1}{2}\cdot f(\frakl/\ell)\cdot 
				R_{\Ktilde/\Ftilde}
				(\frakl^{-1}\frakN\frakD_{\Ktilde/\Ftilde}).
			\] 

			Using Theorems~\ref{thm:BYLocal} and~\ref{thm:LVLocal}, Corollary~\ref{corr:mu}, and Lemma~\ref{lem:Valuations} we deduce
			\begin{align*}
				\frac{v_{\frakl}(\frakN) + 1}{2}\cdot f(\frakl/\ell)\cdot  
					R_{\Ktilde/\Ftilde}
					(\frakl^{-1}\frakN\frakD_{\Ktilde/\Ftilde}) & = 
					\frac{v_{\frakl}(\frakN) + 1}{2}
					\prod_{p|N, p\neq \ell}
					\varepsilon_{\Ktilde/\Ftilde}(p,  \frakN \frakD_{\Ktilde/\Ftilde})\\
				\mu(n) R_{d_u}(\ell^{-1}N)\tilde{\rho}_{d_u}(N)
				& = \frac{v_{\ell}(N) + 1}2\prod_{p|N, p\neq \ell}
								\varepsilon_{d_u}(p, N).
			\end{align*}
			We apply Lemma~\ref{lem:Valuations} to show that $\frac{v_{\frakl}(\frakN) + 1}{2} = \frac{v_{\ell}(N) + 1}2$ and Proposition~\ref{prop:LocalFactorEquality} to give $$\varepsilon_{\Ktilde/\Ftilde}(p,  \frakN \frakD_{\Ktilde/\Ftilde}) = \varepsilon_{d_u}(p, N).$$ This completes the proof of Theorem~\ref{thm:SummandTheorem}.\qed

		\subsection{Proof of Theorem~\ref{thm:Main}}
		\label{subsec:ProofOfMainTheorem}
			Theorem~\ref{thm:Main} follows immediately from Proposition~\ref{prop:EquivCondOnn} and Theorem~\ref{thm:SummandTheorem}.\qed

		\subsection{Proof of Corollary~\ref{cor:BYconj}}
		\label{subsec:ProofOfBYconj}
			By Lemma~\ref{lem:pDoesntDivideDelta}, the assumptions of Theorem~\ref{thm:Main} imply the assumptions of Theorem~\ref{thm:LVFormula}.  Thus, Theorems~\ref{thm:LVFormula} and~\ref{thm:Main} complete the proof.\qed


	\begin{bibdiv}
		\begin{biblist}

			\bib{BY}{article}{
			   author={Bruinier, Jan Hendrik},
			   author={Yang, Tonghai},
			   title={CM-values of Hilbert modular functions},
			   journal={Invent. Math.},
			   volume={163},
			   date={2006},
			   number={2},
			   pages={229--288},
			   issn={0020-9910},
			   review={\MR{2207018 (2008b:11053)}},
			   doi={10.1007/s00222-005-0459-7},
			}

			\bib{GL}{article}{
	   			author={Goren, Eyal Z.},
	   			author={Lauter, Kristin E.},
	   			title={Class invariants for quartic CM fields},
	   			journal={Ann. Inst. Fourier (Grenoble)},
	   			volume={57},
	   			date={2007},
	   			number={2},
	   			pages={457--480},
	   			issn={0373-0956},
	   			review={\MR{2310947 (2008i:11075)}},
			}
			
			\bib{GL10}{article}{
				author={Goren, Eyal Z.},
	   			author={Lauter, Kristin E.},
	   			title={Genus 2 Curves with Complex Multiplication}, 
				journal={Int. Math. Res. Not.}, 
				date={2011},
				pages={75 pp.} 
				doi={10.1093/imrn/rnr052},
			}

			\bib{WIN}{misc}{
				author = {Grundman, Helen},
				author = {Johnson-Leung, Jennifer},
				author = {Lauter, Kristin},
				author = {Salerno, Adriana},
				author = {Viray, Bianca},
				author = {Wittenborn, Erika},
				title = {Igusa class polynomials, embeddings of quartic CM 
					fields, and arithmetic intersection theory},
				book = {WIN - Women in Numbers, 
					Research Directions in Number Theory},
				series= {Fields Institute Communications},
				volume={60},
				pages={35--60},
				publisher={American Mathematical Society},
				date={2011}
			}
		
			\bib{HSW-IntegralBases}{article}{
			   author={Huard, J. G.},
			   author={Spearman, B. K.},
			   author={Williams, K. S.},
			   title={Integral bases for quartic fields with quadratic 
					subfields},
			   journal={J. Number Theory},
			   volume={51},
			   date={1995},
			   number={1},
			   pages={87--102},
			   issn={0022-314X},
			   review={\MR{1321725 (96a:11115)}},
			   doi={10.1006/jnth.1995.1036},
			}

			\bib{KW}{article}{
			   author={Kappe, Luise-Charlotte},
			   author={Warren, Bette},			 
			   title={An Elementary Test for the Galois Group of a Quartic 
				Polynomial},
			   journal={Amer. Math. Monthly},
			   volume={96},
			   date={1989},
			   number={},
			   pages={133--137},
			   issn={},
			   review={},
			   doi={},
			}

			\bib{LV-Igusa}{misc}{
				author = {Lauter, Kristin},
				author = {Viray, Bianca},
				title = {An arithmetic intersection number for denominators of 
							Igusa class polynomials},
				note = {Preprint, \texttt{arXiv:1210.7841}},
			}
			
			\bib{Serre}{book}{
			   author={Serre, Jean-Pierre},
			   title={Cours d'arithm\'etique},
			   publisher={Presses Universitaires de France},
			   volume={},
			   date={1970},
			   number={},
			   pages={},
			   issn={},
			   review={},
			   doi={},
			}

			\bib{Shimura}{book}{
			   author={Shimura, Goro},
			   title={Abelian Varieties with Complex Multiplication and Modular 
					Functions},
			   publisher={Princeton},
			   volume={},
			   date={1998},
			   number={},
			   pages={},
			   issn={},
			   review={},
			   doi={},
			}

			\bib{SW-RelativeIntegralBases}{article}{
			   author={Spearman, B. K.},
			   author={Williams, K. S.},
			   title={Relative integral bases for quartic fields over quadratic
			   subfields},
			   journal={Acta Math. Hungar.},
			   volume={70},
			   date={1996},
			   number={3},
			   pages={185--192},
			   issn={0236-5294},
			   review={\MR{1374384 (97d:11156)}},
			   doi={10.1007/BF02188204},
			}

			\bib{Yang-delta1}{article}{
			   author={Yang, Tonghai},
			   title={An arithmetic intersection formula on Hilbert modular 
					surfaces},
			   journal={Amer. J. Math.},
			   volume={132},
			   date={2010},
			   number={5},
			   pages={1275--1309},
			   issn={0002-9327},
			   review={\MR{2732347 (2012a:11078)}},
			   doi={10.1353/ajm.2010.0002},
			}
			\bib{Yang-HilbertModularAndFaltings}{misc}{
				author={Yang, Tonghai},
				title={Arithmetic intersection on a Hilbert modular 
					surface and the Faltings height},
				note={Preprint, 2007.},
			}

		\end{biblist}
	\end{bibdiv}

\end{document}